\newcommand{\mm}{\mathbf{M}}
\newcommand{\dd}{\mathbf{D}}
\newcommand{\gga}{\mathbf{G}}
\DeclareMathOperator{\Dom}{Dom}
\begin{document}

\title[Sharp lower bounds for Calabi type functionals]{On sharp lower bounds for Calabi type functionals and destabilizing properties of gradient flows}

\author{Mingchen Xia}

\begin{abstract} Let $X$ be a compact K\"ahler manifold with a given ample line bundle $L$.
Donaldson proved one inequality between the Calabi energy of a K\"ahler metric in $c_1(L)$ 
and the negative of normalized Donaldson--Futaki invariants of test configurations of $(X,L)$. 
He also conjectured that the bound is sharp.

In this paper, we prove a metric analogue of Donaldson's conjecture, we show that if we enlarge the space of test configurations to the space of geodesic rays in $\mathcal{E}^2$ and replace the Donaldson--Futaki invariant by the radial Mabuchi K-energy $\mathbf{M}$, then a similar bound holds and the bound is indeed sharp.
Moreover, we construct explicitly a minimizer of $\mathbf{M}$.
On a Fano manifold, a similar sharp bound for the Ricci--Calabi energy is also derived.
\end{abstract}

\maketitle

\setcounter{tocdepth}{1}
\tableofcontents

\section{Introduction}
\subsection*{Motivation}

Let $(X,L)$ be a polarized manifold of dimension $n$, namely, $X$ is a compact complex manifold of dimension $n$ and $L$ is an ample line bundle on $X$. We fix a K\"ahler metric on $X$ in the class $c_1(L)$.
Let $\mathcal{H}$ be the space of smooth strictly $\omega$-psh functions on $X$.
It is well-known that $\mathcal{H}$ is a Fr\'echet--Riemann manifold of constant non-positive curvature with respect to the standard Mabuchi--Donaldson--Semmes $L^2$ metric structure. See \cite{Blo12} for details.

Donaldson (\cite{Don05}) proved the following inequality:
\begin{equation}\label{eq:Don1}
\inf_{\varphi\in \mathcal{H}} Ca(\varphi)\geq \max\left(\sup_{(\mathcal{X},\mathcal{L})}\frac{-\mathrm{DF}(\mathcal{X},\mathcal{L})}{\|(\mathcal{X},\mathcal{L})\|_{L^2}},0\right)\,,
\end{equation}
where $Ca$ is the Calabi functional, $(\mathcal{X},\mathcal{L})$ takes value in the set of non-trivial normal test configurations of $(X,L)$ with reduced central fibre, $\mathrm{DF}$ is the Donaldson--Futaki invariant of a test configuration. For the definition of the $L^2$ norm of a test configuration, see \cite{His16}.
Donaldson conjectured in the same paper that equality should hold.

To appreciate \eqref{eq:Don1}, we recall that $Ca(\varphi)=0$ if{f} $\varphi$ is a cscK metric, on the other hand the right-hand side  of \eqref{eq:Don1} is zero if{f} $(X,L)$ is K-semistable. So \eqref{eq:Don1} establishes a connection between the canonical metrics and the GIT stability.

In terms of non-Archimedean metrics introduced by Boucksom, Hisamoto, Jonsson (\cite{BHJ19}, \cite{BHJ17}), \eqref{eq:Don1} can be reformulated as (see Section~\ref{subsec:TC})
\begin{equation}\label{eq:Don}
\inf_{\varphi\in \mathcal{H}} Ca(\varphi)\geq \max\left(0, \sup_{\psi\in \mathcal{H}^{\NA}\setminus \{\phi_{\mathrm{triv}}\}}\frac{-M^{\NA}(\psi)}{\|\psi\|_{L^2}}\right)\,,
\end{equation}
where $\mathcal{H}^{\NA}$ is the space of non-Archimedean FS metrics on $(X,L)$ (i.e. a FS metric on the Berkovich analytification of $(X,L)$ with respect to the trivial norm on $\mathbb{C}$), $\phi_{\mathrm{triv}}$ denotes the trivial metric, $M$ is the Mabuchi K-energy, the super-index $\NA$ denotes the non-Archimedean version of a functional.

In the present paper, we will prove a metric analogue of Donaldson's conjecture. That is, we prove that equality holds in \eqref{eq:Don} if we enlarge $\mathcal{H}$ to $\mathcal{E}^2$ and $\mathcal{H}^{\NA}$ to $\mathcal{R}^2$ (the space of $\mathcal{E}^2$ geodesic rays) and if we replace the non-Archimedean functional $M^{\NA}$ by the corresponding radial functional $\mm$. 
We also prove an analogous result for the radial Ding functional $\dd$ and the Ricci--Calabi energy $R$. See Section~\ref{sec:inv} for the definitions of various functionals.

Recall that the space $\mathcal{E}^2$ is the metric completion of $\mathcal{H}$ with respect to the $L^2$ metric. It is a deep theorem of Darvas (previously conjectured by Guedj) that the space $\mathcal{E}^2$ can be concretely realized as a subset of $\PSH(X,\omega)$ consisting of $\omega$-psh functions with finite energy. See \cite{Gue14} for a survey of these facts.

\subsection*{Statement of the main result}

Our proof of the main result will rely on the gradient flows of $M$ and $D$, which we recall now. The definitions of various functionals will be recalled in Section~\ref{sec:inv}.

The gradient flow of $M$ is known as the Calabi flow:
\begin{equation}\label{eq:CF}
    \left\{
    \begin{aligned}
        \partial_t\varphi_t&=S(\varphi_t)-\bar{S}\,,\\
        \varphi_t|_{t=0}&=\varphi_0\,,
    \end{aligned}
    \right.
\end{equation}
where $S$ denotes the scalar curvature of a metric, $\varphi_0\in \mathcal{H}$ and
\[
\bar{S}=\frac{1}{V}\int_X S(\varphi)\omega^n_{\varphi}
\]
is independent of the choice of $\varphi\in \mathcal{H}$.

The main difficulty is that the equation is of 4-th order.
The short time existence of the solution is proved in \cite{CH08} using a general method of 4-th order quasi-linear parabolic equations. However, the long time existence is still widely open. Chen, Cheng (\cite{CC18b}) proved the existence of long-time solution under the assumption of the existence of \emph{a priori} bounds of the scalar curvature.

In contrast, if we enlarge the space $\mathcal{H}$ to the finite energy space $\mathcal{E}^2$, it is shown in \cite{BDL17} that the long time solution does exist and coincides with the smooth solution on the time interval where the latter exists. We refer to such a flow as the \emph{weak Calabi flow}. The study of the weak Calabi flow dates back to \cite{Str14} and \cite{Str16}.

In the Fano setting, namely, when $X$ is a Fano manifold and $L=-K_X$, the gradient flow of $D$ is known as the inverse Monge--Ampère flow:
\begin{equation}
    \left\{
    \begin{aligned}
        \partial_t\varphi_t&=1-e^{\rho_{t}}\,,\\
        \varphi_t|_{t=0}&=\varphi_0\,,
    \end{aligned}
    \right.
\end{equation}
where $\varphi_0\in \mathcal{H}$, $\rho$ denotes the Ricci potential, $\rho_t=\rho_{\varphi_t}$. See Section~\ref{sec:inv} for the precise definition.

The study of this flow is initiated recently by Collins, Hisamoto and Takahashi (\cite{CHT17}). 
A crucial advantage of this flow is that the flow equation is a second order parabolic equation, hence the short-time existence follows from the general theory. For the long time behaviour, the standard theory of Monge--Ampère equations reduces the long time existence to derive \emph{a priori} $C^0$ bound of $\varphi_t$. This is done by a compactness argument in \cite{CHT17}.

A key feature of the (weak) Calabi flow is that $M$ is convex along the flow. Hence, $Ca$ is decreasing along the flow and it makes sense to consider the limit value of $Ca$ along the flow. It is easy to prove that the limit value of $Ca$ does not depend on the initial value (see Proposition~\ref{prop:Bind}).

These remarks apply equally to the inverse Monge--Ampère flow with $D$ in place of $M$.

The main result of this paper is the following metric analogue of Donaldson's conjecture \eqref{eq:Don}. 

\begin{theorem}\label{thm:main}
Let $X$ be a compact K\"ahler manifold. Let $\omega$ be a K\"ahler form on $X$. Let $\mathcal{E}^2=\mathcal{E}^2(X,\omega)$, $\mathcal{H}=\mathcal{H}(X,\omega)$.

1. We have
\[
\inf_{\phi\in \mathcal{E}^2}Ca(\phi)=\max_{\ell\in \mathcal{R}^{2}\setminus\{0\}} \frac{-\mm(\ell)}{\|\ell\|}\,.
\]

2. In the Fano case,
\[
\inf_{\varphi\in \mathcal{H}}R(\varphi)=\max_{\ell\in \mathcal{R}^{2}\setminus\{0\}} \frac{-\dd(\ell)}{\|\ell\|}\,.
\]

Moreover, the inf in 1. (resp 2.) can be obtained as follows: let $\phi_0\in \mathcal{E}^2$ with $M(\phi_0)<\infty$ (resp. $\varphi_0\in \mathcal{H}$), let $\phi_t$ (resp. $\varphi_t$) be the weak Calabi flow (resp. inverse Monge--Amp\`ere flow) with initial value $\phi_0$ (resp. $\varphi_0$), then
\[
\inf_{\phi\in \mathcal{E}^2}Ca(\phi)=\lim_{t\to\infty}Ca(\phi_t)\,,\quad \inf_{\varphi\in \mathcal{H}}R(\varphi)=\lim_{t\to\infty}R(\varphi_t)\,.
\]
\end{theorem}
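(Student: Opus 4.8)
The proof splits into the ``easy'' Donaldson-type inequality $\sup_{\ell}\,(-\mm(\ell)/\|\ell\|)\le \inf_{\phi\in\mathcal{E}^2}Ca(\phi)$ and a matching lower bound produced by the weak Calabi flow. Writing $c_\infty:=\lim_{t\to\infty}Ca(\phi_t)$ for the monotone limit of $Ca$ along the flow started at $\phi_0$ (monotone because $M$ is convex along the flow), I will exhibit a single ray $\ell_*\in\mathcal{R}^2\setminus\{0\}$ with $-\mm(\ell_*)/\|\ell_*\|\ge c_\infty$. Since $Ca(\phi_t)\ge\inf_{\mathcal{E}^2}Ca$ forces $c_\infty\ge\inf_{\mathcal{E}^2}Ca$, the chain
\[
\frac{-\mm(\ell_*)}{\|\ell_*\|}\le \sup_{\ell}\frac{-\mm(\ell)}{\|\ell\|}\le \inf_{\phi\in\mathcal{E}^2}Ca(\phi)\le c_\infty\le \frac{-\mm(\ell_*)}{\|\ell_*\|}
\]
collapses to a string of equalities, which simultaneously yields the stated identity, the attainment of the maximum by $\ell_*$, and the fact that the flow realizes the infimum, namely $\inf_{\mathcal{E}^2}Ca=\lim_t Ca(\phi_t)$. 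Part~2 is proved along identical lines with $(M,Ca,\text{weak Calabi flow})$ replaced by $(D,R,\text{inverse Monge--Amp\`ere flow})$.

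\textbf{The easy inequality.} Fix $\varphi\in\mathcal{H}$ and a unit-speed ray $\ell$. For each $s$ let $\sigma^s$ be the $d_2$-geodesic from $\varphi$ to $\ell_s$. Convexity of $M$ along $\sigma^s$, together with the Cauchy--Schwarz bound $|\tfrac{\mathrm{d}}{\mathrm{d}r}\big|_{r=0}M(\sigma^s_r)|\le \|S(\varphi)-\bar S\|_{L^2}=Ca(\varphi)$ for the unit-speed initial direction, gives $M(\ell_s)\ge M(\varphi)-Ca(\varphi)\,d_2(\varphi,\ell_s)$. Dividing by $s$, letting $s\to\infty$, and using $d_2(\varphi,\ell_s)/s\to\|\ell\|$ (triangle inequality for a ray), I obtain $\mm(\ell)\ge -Ca(\varphi)\,\|\ell\|$, i.e. $-\mm(\ell)/\|\ell\|\le Ca(\varphi)$. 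Taking the supremum over $\ell$ and the infimum over $\varphi$ (and using that the infimum over $\mathcal{E}^2$ is already seen on $\mathcal{H}$, since the flow regularizes instantly) yields the easy inequality.

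\textbf{The destabilizing ray.} Along the weak Calabi flow one has $\tfrac{\mathrm{d}}{\mathrm{d}t}M(\phi_t)=-Ca(\phi_t)^2$, so $M(\phi_0)-M(\phi_T)=\int_0^T Ca(\phi_s)^2\,\mathrm{d}s$, while $d_2(\phi_0,\phi_T)\le\int_0^T Ca(\phi_s)\,\mathrm{d}s$. Because $Ca(\phi_s)$ decreases to $c_\infty\ge0$ we have $Ca(\phi_s)^2\ge c_\infty\,Ca(\phi_s)$, hence
\[
\frac{M(\phi_T)-M(\phi_0)}{d_2(\phi_0,\phi_T)}\le -\,\frac{\int_0^T Ca(\phi_s)^2\,\mathrm{d}s}{\int_0^T Ca(\phi_s)\,\mathrm{d}s}\le -c_\infty\,.
\]
Assume $c_\infty>0$, so $R_T:=d_2(\phi_0,\phi_T)\to\infty$. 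Let $\gamma^T$ be the unit-speed geodesic from $\phi_0$ to $\phi_T$. Using the completeness and non-positive curvature of $(\mathcal{E}^2,d_2)$ I extract a subsequence along which $\gamma^T$ converges locally uniformly to a unit-speed geodesic ray $\ell_*$ based at $\phi_0$. For each fixed $s$, convexity of $M$ along the segment $\gamma^T$ gives $\tfrac{1}{s}\bigl(M(\gamma^T_s)-M(\phi_0)\bigr)\le \tfrac{1}{R_T}\bigl(M(\phi_T)-M(\phi_0)\bigr)\le -c_\infty$; passing to the limit with the $d_2$-lower semicontinuity of $M$ and then taking the supremum over $s$ gives $\mm(\ell_*)=\sup_s\tfrac{1}{s}\bigl(M(\ell_*(s))-M(\phi_0)\bigr)\le -c_\infty$, which is exactly $-\mm(\ell_*)/\|\ell_*\|\ge c_\infty$.

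\textbf{Main obstacle and remaining points.} The crux is the final step: one must guarantee that the finite geodesics $\gamma^T$ subconverge to a genuine, nontrivial ray of $\mathcal{R}^2$ and that the radial energy of the limit is dominated by the segment slopes. This rests on the metric structure of $(\mathcal{E}^2,d_2)$ for the subsequential convergence of geodesics to a ray and on the lower semicontinuity of $M$ along $d_2$-convergence for transferring the convexity inequality to the limit; these are the delicate analytic inputs, where the regularity theory of finite-energy geodesics and the results of \cite{BDL17} enter. The degenerate case $c_\infty=0$ is handled separately: the flow need not have infinite length, and one instead reads off from the easy inequality that the supremum equals $0$, so the identity persists. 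Finally, the Fano case runs verbatim: convexity of $D$ along finite-energy geodesics replaces that of $M$, the inverse Monge--Amp\`ere flow satisfies the analogous identity $\tfrac{\mathrm{d}}{\mathrm{d}t}D(\varphi_t)=-R(\varphi_t)^2$ with $R$ the $L^2$-speed of the flow, long-time existence is supplied by \cite{CHT17}, and $\dd$ plays the role of $\mm$.
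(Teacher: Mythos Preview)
Your overall strategy is exactly the paper's: prove the moment--weight inequality, then build a destabilizing ray out of the gradient flow by taking limits of geodesic segments from $\phi_0$ to $\phi_T$. The quantitative inequalities you write (e.g.\ $(M(\phi_T)-M(\phi_0))/d_2(\phi_0,\phi_T)\le -c_\infty$) are correct and match the paper's Proposition~\ref{prop:ulb}.

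The gap is in the sentence ``Using the completeness and non-positive curvature of $(\mathcal{E}^2,d_2)$ I extract a subsequence along which $\gamma^T$ converges locally uniformly to a unit-speed geodesic ray.'' Completeness and CAT(0) alone do \emph{not} give subsequential $d_2$-convergence of bounded sequences: $\mathcal{E}^2$ is not locally compact. One could pass to weak ($\Delta$-)limits, but then one must check that a weak limit of geodesic segments is again a geodesic, which is not automatic in a general Hadamard space. You correctly flag this as the ``main obstacle'' but misidentify the fix as ``regularity theory of finite-energy geodesics and \cite{BDL17}.'' The paper's actual mechanism is different and specific: it introduces the weaker $d_1$-topology on $\mathcal{E}^2$, observes that $M$ is bounded along the segments $\gamma^T$ by convexity (since $M$ decreases along the flow), and then invokes the entropy compactness of \cite{BBEGZ16} (Corollary~\ref{cor:BBEGZc}) to get $d_1$-sequential compactness of the relevant set. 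Stability of finite-energy geodesics under $d_1$-limits (from \cite{BBJ15}) ensures the limit is a geodesic, and $d_1$-convergence of a $d_2$-bounded sequence implies weak convergence (\cite{BDL17}), which is what makes $M$ lower semicontinuous along the limit. This package is what the paper abstracts as a ``compatible topology'' in Theorem~\ref{thm:abst}.

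Two smaller points. In Part~2 you say the Fano case ``runs verbatim,'' but you still need the segments $\gamma^T$ to sit in an entropy-bounded set; this requires that $M$ (not just $D$) is decreasing along the inverse Monge--Amp\`ere flow, which is \cite{CHT17} Lemma~4.6 and is used explicitly in the paper. Also, the theorem asserts $\inf_{\varphi\in\mathcal{H}}R(\varphi)$, not $\inf_{\mathcal{E}^2}$; the paper closes this via long-time smooth existence of the inverse Monge--Amp\`ere flow together with Proposition~\ref{prop:Bind}, a step your sketch omits.
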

Notice that in our theorem, we do not require that the polarization of $X$ be integral anymore.

Here $\mathcal{R}^{2}$ is the space of geodesic rays in $\mathcal{E}^2$ emanating from a point $\varphi\in \mathcal{H}$. The norm $\|\ell\|$ of $\ell\in \mathcal{R}^2$ is defined as the $d_2$ distance between $\ell_0$ and $\ell_1$.
The notation $0$ is used for the constant geodesic.
According to the recent work of Darvas--Lu (\cite{DL18}), the max terms of both statements do not depend on the choice of $\varphi$. In the general context of Hadamard spaces, $\mathcal{R}^2$ is also known as the cone at infinity of $\mathcal{H}$ (\cite{Bal12}). For the definition of $Ca$ on $\mathcal{E}^2$, see Section~\ref{subsec:wcf}.
We also notice that by considering the following geodesic ray $(\varphi+t)_t\in \mathcal{R}^2$,  both max terms in Theorem~\ref{thm:main} are non-negative.

An abstract version of this result, which applies to general gradient flows in Hadamard spaces is also included, see Theorem~\ref{thm:abst}.

In Section~\ref{subsec:TC}, we explain the relation between Donaldson's conjecture and Theorem~\ref{thm:main}. 

Our proof is constructive. We construct a geodesic ray (called the \emph{Darvas--He geodesic ray}) following the method in \cite{DH17}, which was designed originally for the K\"ahler--Ricci flow. We calculate the radial $M$ or $D$ functional along this ray and show that this ray is indeed a maximizer.

In the unstable case, the situation is rather simple. We prove
\begin{corollary}\label{cor:main}
1. Assume that $(X,\omega)$ is geodesically unstable (Definition~\ref{def:geoduns}),
then there is a unique maximizer of $-\mm$ on the unit sphere in $\mathcal{R}^2$.

2. In the Fano case, assume that $X$ is K-unstable,
then there is a unique maximizer of $-\dd$ on the unit sphere in $\mathcal{R}^2$. 
\end{corollary}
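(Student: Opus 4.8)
The plan is to get existence of a maximizer for free from Theorem~\ref{thm:main} and then to extract uniqueness from a single strict-convexity estimate in the Hadamard space $\mathcal{R}^2$. Set $c:=\inf_{\phi\in\mathcal{E}^2}Ca(\phi)=\max_{\ell}\,-\mm(\ell)/\|\ell\|$ in part~1 (and $c:=\inf_{\varphi\in\mathcal{H}}R(\varphi)$ with $\dd$ in place of $\mm$ in part~2). Theorem~\ref{thm:main} already tells us the maximum is attained, for instance by the Darvas--He geodesic ray built from the weak Calabi flow (resp.\ the inverse Monge--Amp\`ere flow). The whole point of the instability hypothesis is that it records exactly the strict inequality $c>0$: geodesic instability (Definition~\ref{def:geoduns}) and, in the Fano case, K-instability are by definition the assertions that this optimal slope is positive. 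It is this positivity that I will turn into uniqueness.

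For uniqueness, suppose $\ell^0,\ell^1\in\mathcal{R}^2$ were two distinct maximizers, normalized so that $\|\ell^0\|=\|\ell^1\|=1$ and hence $\mm(\ell^0)=\mm(\ell^1)=-c$. Fixing a common base point $\varphi\in\mathcal{H}$ and unit-speed parametrizations, I would let $m_t$ be the $d_2$-midpoint of the geodesic segment $[\ell^0_t,\ell^1_t]$ and let $\ell^{1/2}\in\mathcal{R}^2$ be the geodesic ray asymptotic to $\{m_t\}$, i.e.\ the midpoint of $\ell^0$ and $\ell^1$ in the cone at infinity. Since $\ell^0\neq\ell^1$, the asymptotic speed $\alpha:=\lim_{t\to\infty}d_2(\ell^0_t,\ell^1_t)/t$ is strictly positive, and the nonpositive-curvature (Bruhat--Tits / CN) inequality
\[
d_2(\varphi,m_t)^2\leq\tfrac12\,d_2(\varphi,\ell^0_t)^2+\tfrac12\,d_2(\varphi,\ell^1_t)^2-\tfrac14\,d_2(\ell^0_t,\ell^1_t)^2
\]
forces $\|\ell^{1/2}\|=\lim_t d_2(\varphi,m_t)/t\leq\sqrt{1-\alpha^2/4}<1$. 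Convexity of $M$ (resp.\ of $D$, by Berndtsson) along each segment $[\ell^0_t,\ell^1_t]$ gives $M(m_t)\leq\tfrac12 M(\ell^0_t)+\tfrac12 M(\ell^1_t)$; dividing by $t$ and letting $t\to\infty$ yields $\mm(\ell^{1/2})\leq-c$ (the degenerate case $\alpha=2$, where $\ell^{1/2}$ would be constant, is excluded since convexity would give $0=\mm(0)\leq-c<0$). Combining the two estimates,
\[
\frac{-\mm(\ell^{1/2})}{\|\ell^{1/2}\|}\geq\frac{c}{\sqrt{1-\alpha^2/4}}>c,
\]
which contradicts maximality of $c$. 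Therefore $\alpha=0$, i.e.\ $\ell^0=\ell^1$, and the maximizer is unique. Part~2 runs verbatim with $(\dd,R)$ in place of $(\mm,Ca)$.

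The main obstacle is purely analytic and concerns the two limiting passages attached to $\ell^{1/2}$: I must verify that the midpoints $\{m_t\}$ genuinely determine a ray in $\mathcal{R}^2$ and, more delicately, that the radial energy is read off correctly as the slope $\mm(\ell^{1/2})=\lim_t M(m_t)/t$. This is exactly the convexity together with the lower semicontinuity of the radial Mabuchi (resp.\ Ding) energy on the cone at infinity, combined with the monotone slope convergence supplied by Darvas--Lu; once $\mm$ is known to behave well under taking midpoints, both displayed inequalities are immediate. It is worth stressing that the strict positivity $c>0$ is essential: in the semistable case $c=0$ the final inequality degenerates to an equality and the argument, correctly, produces no contradiction, which is why uniqueness is asserted only in the unstable regime.
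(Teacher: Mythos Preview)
Your argument is correct and is the same idea as the paper's: midpoint has strictly smaller norm by CAT(0), the radial functional stays $\le -c$ by convexity, so $c>0$ yields a contradiction. The only difference is organizational: the paper first proves that $(\mathcal{R}^2,d_2^c)$ is itself a Hadamard space (Theorem~\ref{thm:Had}) and then invokes the convexity of $\mm$ along $d_2^c$-geodesics from \cite{DL18} Theorem~4.11, so the whole proof takes place in $\mathcal{R}^2$ and the ``main obstacle'' you flag (passing from the pointwise midpoints $m_t\in\mathcal{E}^2$ to the asymptotic midpoint ray and controlling $\mm$ there) simply never arises---that limiting passage is precisely what Theorem~\ref{thm:Had} and \cite{DL18} Theorem~4.11 package for you.
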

\subsection*{Relations to other results}
In the toric setting, various special cases are already known. 

Part 2 of Theorem~\ref{thm:main} is proved in the toric setting in \cite{CHT17} Theorem~1.4, see also \cite{Yao17}.

As for Part 1 of Theorem~\ref{thm:main}, in the toric setting, it is proved in \cite{Sze08} (1). Moreover, assuming the long time existence of smooth solutions to the Calabi flow, the original version of Donaldson's conjecture is also proved in the toric setting in the same paper.

A similar result for the $H$ functional on Fano manifolds is proved in \cite{DS16}.

After finishing this paper, the author was informed that T. Hisamoto (\cite{His19}) has independently proved the Fano case of the main theorem. Moreover, in the Fano case, Hisamoto also proved that the max in Theorem~\ref{thm:main} can be obtained by a sequence of test configurations.

After the first version of this paper on arXiv, there have been a number of related papers about optimal distabilizing properties in various settings. See \cite{BLZ19}, \cite{Der19}, \cite{Tak19}, \cite{SD19}.

\subsection*{Acknowledgement}
The author benefited from discussions with Robert Berman, Tamás Darvas, Jiaxiang Wang, Tomoyuki Hisamoto and Miroslav Bačák. The author would like to thank S\'ebastien Boucksom for pointing out a mistake in the arXiv version and the anonymous referee for suggestions to improve the presentation of the paper.

\section{Preliminaries on K\"ahler geometry, pluripotential theory and Mabuchi geometry}\label{sec:inv}
Let $X$ be a compact polarized manifold of dimension $n$. Let $\omega$ be a K\"ahler form on $X$.
We will frequently consider the special case where $X$ is Fano and $\omega\in c_1(X)$, which we refer to as the Fano case.

Set $V=\int_X \omega^n$.
Let $\mathcal{H}$ be the space of smooth strictly $\omega$-psh functions with the usual Mabuchi--Semmes--Donaldson $L^2$-metric: take $f,g\in C^{\infty}(X)=T_{\varphi}\mathcal{H}$ for some $\varphi\in \mathcal{H}$, define
\[
\langle f,g\rangle_{\varphi}=\frac{1}{V}\int_X fg\,\omega_{\varphi}^n\,.
\]
It is well-known that $\mathcal{H}$ is a Fr\'echet--Riemann manifold of constant non-positive curvature. See \cite{Blo12} for details.

Given $\varphi\in \mathcal{H}$, write $\omega_{\varphi}=\omega+\ddc \varphi$, where we use the convention 
\[
\ddc:=\frac{\mathrm{i}}{2\pi}\partial \overline{\partial}\,.
\]

\subsection{Finite energy class}
It is proved by Darvas (\cite{Dar15}) that the metric completion of $\mathcal{H}$ with respect to the $L^2$ metric can be realized by the set $\mathcal{E}^2$ of finite energy $\omega$-psh functions. We briefly recall the related definitions. 

We define
\[
\mathcal{E}(X,\omega)=\left\{\,\varphi\in \PSH(X,\omega): \int_X  \omega_{\varphi}^n =V \,\right\}\,.
\]
Here and in the sequel, the product $\omega_{\varphi}^n$ is always interpreted in the non-pluripolar sense of \cite{BEGZ10}.

Define the following classes for $1\leq p<\infty$
\[
\mathcal{E}^p:=\left\{ \,\varphi\in \mathcal{E}(X,\omega): \int_{X}|\varphi|^p\, \omega_{\varphi}^n<\infty\, \right\}\,.
\]
We also define $\mathcal{E}^{\infty}$ to be the set of bounded $\omega$-psh functions on $X$.

According to Chen (\cite{Chen00}), for any $\varphi_0,\varphi_1\in \mathcal{H}$, there is a unique weak geodesic connecting $\varphi_t$ connecting them. According to a recent regularity result (\cite{CTW17}), this weak geodesic has $C^{1,1}$-regularity. One can define a distance $d_p$ on $\mathcal{H}$ for each $p\in [1,\infty)$ by
\begin{equation}
d_p(\varphi_0,\varphi_1)=\left(\frac{1}{V}\int_X |\dot{\varphi}_0|^p\,\omega_{\varphi_0}^n\right)^{1/p}\,.
\end{equation}
It is shown in \cite{Dar15} Theorem~3.5 that $d_p$ is indeed a metric on $\mathcal{H}$. However, this metric is not complete. It is natural to look for the metric completion of $d_p$. In the same paper \cite{Dar15}, Darvas proved that the metric completion of $\mathcal{H}$ with respect to $d_p$ can be realized as $\mathcal{E}^p$. 
For the definition of $d_p$ on $\mathcal{E}^p$, we refer to \cite{Dar15} (5).
Moreover, $\mathcal{E}^p$ is indeed a geodesic metric space (\cite{Dar15} Theorem~4.17). We will recall some related definitions below in Section~\ref{subsec:wg} and Section~\ref{subsec:mg}.

Recall for $\varphi,\psi\in \mathcal{E}^2$, we have
\begin{equation}\label{eq:ulb1}
C^{-1}I_p(\varphi,\psi)\leq d_p(\varphi,\psi)\leq C I_p(\varphi,\psi)\,,
\end{equation}
where $C>0$ is a universal constant and
\[
I_p(\varphi,\psi)=\left(\int_X |\varphi-\psi|^p\,\omega_{\varphi}^n \right)^{1/p}+\left(\int_X |\varphi-\psi|^p \,\omega_{\varphi}^n \right)^{1/p}\,.
\]
For a proof, see \cite{Dar15} Theorem~3.

The metric topology on $\mathcal{E}^1$ is also known as the strong topology. It is studied in detail in \cite{BBEGZ16}. In this case, the topology admits a very explicit description.

Recall that the usual Monge--Ampère energy $E:\mathcal{H}\rightarrow \mathbb{R}$ (See~\eqref{eq:E}) extends to $E:\mathcal{E}^1\rightarrow \mathbb{R}$.
The functional is concave, increasing. See \cite{BB10} Section~3 for example.
The strong topology on $\mathcal{E}^1$ is then the coarsest refinement of the $L^1$-topology that makes $E$ continuous. For the proof of this fact, see \cite{Dar15} Proposition~5.9.

We refer to \cite{Dar19} for a systematic introduction to this material.
\subsection{Functionals}\label{subsec:fun}
Let $E:\mathcal{H}\rightarrow \mathbb{R}$ be the Monge--Ampère energy functional:
\begin{equation}\label{eq:E}
E(\varphi)=\frac{1}{(n+1)V}\sum_{j=0}^n \int_X \varphi \, \omega^j \wedge \omega_\varphi^{n-j}\,.
\end{equation}
This functional extends to a concave, increasing functional on $\mathcal{E}^1$ in a natural way. See \cite{BB10} Section~3.

Define the Calabi energy $Ca:\mathcal{H}\rightarrow \mathbb{R}$ as
\begin{equation}\label{eq:defCa}
Ca(\varphi)=\left(\frac{1}{V}\int_X (S(\varphi)-\bar{S})^2\,\omega_{\varphi}^n\right)^{1/2}\,,
\end{equation}
where $S(\varphi)$ is the scalar curvature of $\varphi$ and \[
\bar{S}=\frac{1}{V}\int_X S_{\varphi}\,\omega_{\varphi}^n
\]
is independent of the choice of $\varphi\in \mathcal{H}$. Note that in most literature, Calabi energy is defined as $(Ca)^2$.

We will show in Section~\ref{subsec:wcf} that $Ca$ has a natural lsc extension to $\mathcal{E}^2\rightarrow (-\infty,\infty]$.

Recall the definition of $E_R:\mathcal{H}\rightarrow \mathbb{R}$:
\begin{equation}
E_R(\varphi)=\frac{1}{nV}\sum_{j=0}^{n-1}\int_X \varphi\Ric \omega\wedge \omega_{\varphi}^j\wedge \omega^{n-1-j}\,.
\end{equation}
As in \cite{BDL17} Section~4.2, this functional extends naturally to a continuous functional $E_R:\mathcal{E}^1\rightarrow \mathbb{R}$.

Recall the definition of the entropy $H:\mathcal{H}\rightarrow \mathbb{R}$:
\begin{equation}
H(\varphi)=\frac{1}{V}\int_X \log \frac{\omega_{\varphi}^n}{\omega^n} \,\omega^n_{\varphi}\,.
\end{equation}
This functional extends naturally to $H:\mathcal{E}^1\rightarrow [0,\infty]$.

Let us also recall the definition of the Mabuchi functional $M:\mathcal{H}\rightarrow \mathbb{R}$:
\begin{equation}
M(\varphi)=H(\varphi)+\bar{S}E(\varphi)-nE_R(\varphi)\,.
\end{equation}
We have extended every term, hence we get $M:\mathcal{E}^1\rightarrow (-\infty,\infty]$. The extension is lsc and convex along finite energy geodesics. See \cite{BDL17} Theorem~4.7, \cite{BB17}, \cite{CLP14}  for details.

In the Fano setting, we have two more functionals $R$ and $D$.

Let $D:\mathcal{H}\rightarrow \mathbb{R}$ be the Ding functional. 
Recall that by definition, this means
\begin{equation}\label{eq:defD}
\left\{
\begin{aligned}
\delta D(\varphi)&=\frac{1}{V}(e^{\rho_{\varphi}}-1)\omega_{\varphi}^n\,,\\
D(\omega)&=0\,.
\end{aligned}
\right.
\end{equation}
where $\rho_{\varphi}$ is the Ricci potential of $\varphi$:
\begin{equation}\label{eq:defrho}
\left\{
\begin{aligned}
\Ric \omega_{\varphi}-\omega_{\varphi} &= \ddc\rho_{\varphi}\,,\\
\int_{X} \left(e^{\rho_\varphi}-1\right)\omega_{\varphi}^n &=0\,.
\end{aligned}
\right.
\end{equation}

More explicitly, this means
\begin{equation}
D(\varphi)=-E(\varphi)-\log \int_X e^{-\varphi+\rho}\omega^n\,,
\end{equation}
where $\rho$ is the Ricci potential of $\omega$.

This formula then extends directly to $\mathcal{E}^1\rightarrow \mathbb{R}$. The extension is continuous and convex along finite energy geodesics. We refer to \cite{Bernd09}, \cite{Bernd15}, \cite{Dar17} Chapter~4 for details.

Define the Ricci--Calabi energy $R:\mathcal{H}\rightarrow \mathbb{R}$ as
\[
R(\varphi)=\left(\frac{1}{V}\int_X (e^{\rho_{\varphi}}-1)^2\omega_{\varphi}^n\right)^{1/2}\,.
\]
\subsection{The space of weak geodesic rays}\label{subsec:wg}
In this section, we recall some notions from the very recent work of Darvas--Lu (\cite{DL18}).

We first recall the definition of (weak) geodesics. 

Let $\Delta(r)\subset \mathbb{C}$ be the open disc of radius $r$ centered at $0$. Let $\Delta=\Delta(1)$. Let $\Delta^*=\Delta\setminus\{0\}$. Let $\pi:X\times \Delta^*\rightarrow X$ be the natural projection. 

Let $\ell_t$ ($t\in [0,a]$, $a\in(0,\infty]$) be a ray or segment in $\mathcal{E}^{\infty}(X,\omega)$.
Define $D=\bar{\Delta}\setminus\Delta(e^{-a})$.
The complexification $\Phi$ of $\ell_t$ is by definition a function on $X\times D$, such that $\Phi_s=\ell_{-\log |s|}\,,s\in D$.
When $\Phi$ is $\pi^*\omega$-psh and solves the homogeneous Monge--Ampère equation
\[
(\pi^*\omega+\ddc \Phi)^{n+1}=0 \quad \text{on }X\times \mathrm{Int}\,D \,,
\]
we call $\ell$ a weak geodesic.
Similarly, $\ell_t$ is called a subgeodesic, if $\Phi$ is just $\pi^*\omega$-psh.

For two points $\varphi,\psi\in \mathcal{H}$, there is a unique (up to normalization) weak geodesic segment connecting $\varphi$ and $\psi$, the geodesic segment has $C^{1,1}$ regularity (\cite{CTW17}).

In general, for any two points $\varphi,\psi\in \mathcal{E}^p$ ($p\in [1,\infty]$), we may take a Demailly approximation, namely, decreasing sequences $\varphi_j$, $\psi_j$ in $\mathcal{H}$, converging to $\varphi$ and $\psi$ respectively. Then the geodesic segment connecting $\varphi_j$ and $\psi_j$ converge to a unique segment in $\mathcal{E}^p$, which does not depend on the choice of $\varphi_j$ and $\psi_j$. The limit is known as the finite energy geodesic segment in $\mathcal{E}^p$ connecting $\varphi$ and $\psi$. 
The finite energy geodesic is indeed a $d_p$-metric geodesic. Moreover, $\mathcal{E}^p$ is a geodesic metric space. The definitions of a metric geodesic and a geodesic metric space are recalled in Section~\ref{subsec:mg}.
It is known that the $d_p$-metric geodesic between points in $\mathcal{E}^p$ when $p>1$ is unique, so in these cases (\cite{DL18}), we use the term \emph{geodesic} instead of \emph{finite energy geodesic}. Note however that, the $d_1$-geodesics are not unique in general.

Now a ray $\ell_t$ ($t\geq 0$) in $\mathcal{E}^p$ is called a \emph{finite energy geodesic ray} in $\mathcal{E}^p$ emanating from $\ell_0$ if for any $s_2>s_1\geq 0$, the restriction of $\ell$ to $[s_1,s_2]$ is a finite energy geodesic segment in $\mathcal{E}^p$.

Let $\varphi\in \mathcal{H}$. Let $\mathcal{R}^p_{\varphi}$ be the set of finite energy geodesic rays in $\mathcal{E}^p$ emanating from $\varphi$. There is a special ray, namely the constant geodesic. This ray will be referred to as \emph{the origin}. We sometimes use the notation $0$ for the origin.

Define the chordal metric on $\mathcal{R}^p_{\varphi}$ as follows: let $\ell^1$ and $\ell^2$ be two elements in $\mathcal{R}^p_{\varphi}$, the distance is defined by
\begin{equation}
d^c_p(\ell^1,\ell^2):=\lim_{t\to\infty}\frac{d_p(\ell^1_t,\ell^2_t)}{t}\,.
\end{equation}
Now assume that $1\leq p<\infty$, then
 $(\mathcal{R}^p_{\varphi},d^c_p)$ is a complete geodesic metric space (\cite{DL18} Theorem~4.7, Theorem~4.9).

For any $\varphi,\psi\in \mathcal{E}^p$, there is a canonical isometry
\[
P_{\varphi,\psi}:\mathcal{R}^p_{\varphi} \rightarrow \mathcal{R}^p_{\psi}
\]
mapping each finite energy geodesic ray $\ell$ emanating from $\varphi$ to the unique parallel finite energy geodesic ray $\ell'$ emanating from $\psi$ (\cite{DL18} Theorem~1.3). Here parallel means that $d_p(\ell_t,\ell'_t)$ is bounded.
Moreover, if $\ell^0\in \mathcal{R}^p_{\varphi}$ and $\ell^1 \in \mathcal{R}^p_{\psi}$ are parallel, the radial functional $\mm$ (resp. $\dd$) to be defined in  Section~\ref{subsec:rad} takes same value on $\ell^1$ and $\ell^2$ if $M(\varphi),M(\psi)<\infty$ (resp. no restriction for $D$). See \cite{DL18} Lemma~4.10.

Hence, for our purpose, we simply identify $\mathcal{R}^p_{\varphi}$ for various $\varphi$ and write $\mathcal{R}^p$ when $p<\infty$.

Now $\mathcal{R}^p_{\varphi}$ forms a decreasing chain indexed by $p$. We know that $\mathcal{R}^{\infty}_{\varphi}$ is dense in arbitrary $\mathcal{R}^p_{\varphi}$ (\cite{DL18} Theorem~1.5).

\subsection{Radial functionals}\label{subsec:rad}
As $M$ and $D$ are both convex along finite energy geodesics, it is natural to define the radial version of these functionals. Fix $\varphi\in \mathcal{E}^1$.

Define $\mm:\mathcal{R}^1_{\varphi} \rightarrow (-\infty,\infty]$ by
\begin{equation}
\mm(\ell):=\lim_{t\to\infty}\frac{M(\ell_t)}{t}\,.
\end{equation}
Similarly, in the Fano case, define
$\dd:\mathcal{R}^1_{\varphi} \rightarrow (-\infty,\infty]$ by
\begin{equation}
\dd(\ell):=\lim_{t\to\infty}\frac{D(\ell_t)}{t}\,.
\end{equation}

We also define the $p$-energy of $\ell\in \mathcal{R}^p$ as follows:
\begin{equation}\label{eq:pnorm}
\|\ell\|_p:=E_p(\ell):=d^c_p(\ell,0)\,.
\end{equation}
Here $0$ denotes the constant geodesic. When $p=2$, we omit the subindex $2$.

Let $\ell_t$ ($t\in [0,s]$, $s>0$) be a weak geodesic segment between $\ell_0,\ell_s\in \mathcal{H}$. We define
\begin{equation}
\|\ell\|=E_2(\ell):= \left(\frac{1}{V}\int_X |\dot{\ell}_t|^2 \omega_{\ell_t}^n\right)^{1/2}
\end{equation}
for any $t\in [0,s]$. It is well-known that this definition does not depend on the choice of $t$ and is equal to $s^{-1}d_2(\ell_0,\ell_s)$.
See \cite{Dar15} Lemma~4.11.

\section{Preliminaries on metric geometry and gradient flows}\label{sec:weakg}
In this section, we review some basic facts about weak gradient flows on Hadamard spaces. We refer to \cite{Bac14}, \cite{AGS08}, \cite{Bac18} for details.

\subsection{Metric geometry}\label{subsec:mg}
We review several basic definitions from metric geometry.

Let $(M,d)$ be a metric space. A \emph{path} in $M$ is an element in $C^0([0 ,1],M)$. Let $\gamma$ be a path in $M$, the \emph{length} of $\gamma$ is defined as
\[
\ell(\gamma):=\sup \sum_{i=1}^n d(\gamma_{t_{i-1}},\gamma_{t_i})\,,
\]
where the sup is taken over the set of partitions $0=t_0<t_1<\cdots<t_n=1$ for various $n\in \mathbb{Z}_{>0}$.

The metric space $(M,d)$ is a \emph{length space} if for any $x,y\in M$, for any $\epsilon>0$, there is a path $\gamma$ in $M$ with $\gamma_0=x$, $\gamma_1=y$ and
\[
\ell(\gamma)\leq d(x,y)+\epsilon\,.
\]

A path $\gamma$ in $M$ is called a \emph{geodesic} if 
\[
d(\gamma_s,\gamma_t)=d(\gamma_0,\gamma_1)|s-t|
\]
for any $s,t\in [0,1]$.

The metric space $(M,d)$ is a \emph{geodesic space} if for any $x,y\in M$, there is a geodesic $\gamma$ with $\gamma_0=x$, $\gamma_1=y$.

From now on, we always assume that $(M,d)$ is a geodesic space.
A \emph{geodesic triangle} with vertices $x,y,z\in M$ consists of three geodesics $g_{xy}$, $g_{yz}$, $g_{zx}$, joining $x$ to $y$, $y$ to $z$, $z$ to $x$ respectively. The triangle will be denoted as $\Delta(x,y,z)$ although it is not uniquely determined by $x,y,z$. A \emph{companion triangle} $\Delta(\bar{x},\bar{y},\bar{z})$ of $\Delta(x,y,z)$ is a triangle in $\mathbb{R}^2$, whose vertices are denoted as $\bar{x},\bar{y},\bar{z}$, such that
\[
|\bar{x}-\bar{y}|=d(x,y)\,,\quad |\bar{y}-\bar{z}|=d(y,z)\,,\quad |\bar{z}-\bar{x}|=d(x,y)\,.
\]
Let $w$ be a point on the geodesic $g_{xy}$. The \emph{companion point} of $w$ is a point $\bar{w}$ on the line segment from $\bar{x}$ to $\bar{y}$, such that
\[
d(w,y)=|\bar{w}-\bar{y}|\,.
\]
Similarly one can define the companion point of a point on $g_{yz}$ and $g_{zx}$.

The geodesic metric space $(M,d)$ is a \emph{CAT(0) space} if for any geodesic triangle $\Delta(x,y,z)$ in $M$ with companion triangle $\Delta(\bar{x},\bar{y},\bar{z})$, for any $a$ on $g_{xy}$, $b$ on $g_{xz}$ with companion points $\bar{a}$, $\bar{b}$, we have
\[
d(a,b)\leq |\bar{a}-\bar{b}|\,.
\]

Geometrically, the CAT(0) condition means that $(M,d)$ has non-positive curvature. See \cite{Bac14} for a detailed explanation.

The geodesic metric space $(M,d)$ is a \emph{Hadamard space} if it is complete and is a CAT(0) space.

Examples of Hadamard spaces include complete Riemannian manifolds of non-positive curvature, the space $\mathcal{E}^2$, Hilbert spaces, e.t.c..

We recall the concept of weak convergence (also called $\Delta$-convergence) in a Hadamard space. See \cite{KP08} for a thorough treatment.
Let $(M,d)$ be a Hadamard space. Let $x_n\in M$ be a bounded sequence. For $x\in M$, define
\[
r(x):=\varlimsup_{n\to\infty} d(x,x_n)\,.
\]
The \emph{asymptotic radius} of $(x_n)$ is defined as $\inf_{x\in M}r(x)$. The \emph{asymptotic center} of $(x_n)$ is defined as the set
\[
\left\{\, x\in M:r(x)=\inf_{y\in M}r(y)\,\right\}\,.
\]
According to \cite{DKS06} Proposition~7, the set consists of a single element. By abuse of language, we also call this element the asymptotic center  of $(x_n)$. If $x\in M$ is the asymptotic center of every subsequence of $(x_n)$, we say that $(x_n)$ \emph{converges weakly} (or $\Delta$-\emph{converges}) to $x$. 

\begin{proposition}\label{prop:weakineq}
Let $(M,d)$ be a Hadamard space. Assume that $x_n\in M$ is a sequence that converges weakly to $x\in M$. Let $y\in M$, then
\begin{equation}\label{eq:noname}
d(y,x)\leq \varliminf_{n\to\infty} d(y,x_n)\,.
\end{equation}
\end{proposition}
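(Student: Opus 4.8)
The plan is to deduce the inequality from the fact that closed balls in a Hadamard space are weakly closed, which in turn rests on the nearest-point projection onto closed convex subsets. First I would reduce to a convenient subsequence. Set $L:=\varliminf_{n\to\infty} d(y,x_n)$ and pass to a subsequence along which $d(y,x_n)\to L$. Since by hypothesis the asymptotic centre of \emph{every} subsequence of $(x_n)$ equals $x$, and every subsequence of the chosen subsequence is again a subsequence of $(x_n)$, the chosen subsequence still converges weakly to $x$; so after relabelling I may assume that $d(y,x_n)\to L$ while $x$ remains the asymptotic centre of $(x_n)$. Boundedness of $(x_n)$ (implicit in the definition of weak convergence) guarantees that $L$ and all the radii $r(\cdot)=\varlimsup_{n} d(\cdot,x_n)$ appearing below are finite.

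The core step is a projection argument. Fix $\epsilon>0$ and put $C:=\{\,w\in M: d(w,y)\le L+\epsilon\,\}$, which is closed and convex because the distance function $d(\cdot,y)$ is convex along geodesics in a CAT(0) space. For all large $n$ we have $x_n\in C$. Let $p:=P_C(x)$ be the metric projection of $x$ onto $C$, which exists and is unique in a Hadamard space and satisfies the obtuse-angle (Pythagorean-type) variational inequality $d(x,c)^2\ge d(x,p)^2+d(p,c)^2$ for every $c\in C$; see \cite{Bac14}. Taking $c=x_n$ for large $n$ and passing to $\varlimsup_{n}$ gives $r(x)^2\ge d(x,p)^2+r(p)^2$. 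Since $x$ is the asymptotic centre it minimises $r$, so $r(x)\le r(p)$, and the two inequalities together force $d(x,p)=0$. Hence $x=p\in C$, that is $d(y,x)\le L+\epsilon$; letting $\epsilon\downarrow 0$ yields $d(y,x)\le L$, as desired.

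The point I would flag as the crux is precisely the choice of the projection inequality over more naive tools. Convexity of $t\mapsto d(\gamma(t),x_n)$ along the geodesic from $x$ to $y$, or the Bruhat--Tits (CN) inequality applied to the midpoint of $x$ and $y$, only yields the weaker estimates $r(x)\le L$ and $r(x)^2+\tfrac12 d(y,x)^2\le L^2$, the latter giving merely $d(y,x)\le \sqrt2\,L$. The sharp bound $d(y,x)\le L$ genuinely uses the orthogonality encoded in the nearest-point projection, which upgrades the midpoint estimate to a true Pythagorean inequality; combining this with the minimality of the asymptotic centre is exactly what closes the gap and pins $x$ inside every ball $\{d(\cdot,y)\le L+\epsilon\}$.
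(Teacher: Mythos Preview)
Your argument is correct. The paper itself does not give a proof but simply invokes \cite{Bac13} Lemma~3.1, which asserts that any convex lsc function on a Hadamard space is weakly lsc; applying this to $w\mapsto d(w,y)$ yields the proposition in one line. Your approach is a self-contained unpacking of that black box in the special case at hand: you show directly that closed balls (as closed convex sets) are weakly closed via the metric projection and its Pythagorean inequality $d(x,c)^2\ge d(x,P_Cx)^2+d(P_Cx,c)^2$, and then combine this with the minimality of $r$ at the asymptotic centre. This is essentially the standard proof of the cited lemma restricted to sublevel sets of $d(\cdot,y)$, so the two routes are close cousins; the paper trades brevity for a reference, while your write-up makes the mechanism explicit and, in particular, your closing remark on why the CN/midpoint inequality alone only gives the suboptimal bound $d(y,x)\le\sqrt{2}\,L$ is a nice diagnostic of where the sharpness really comes from.
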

This proposition is a special case of \cite{Bac13} Lemma~3.1, which says that a convex lsc function on a Hadamard space is weakly lsc.

\subsection{Weak gradient flows on Hadamard spaces}
In this subsection, following \cite{Bac14} Chapter 5, we explore the general theory of weak gradient flows on Hadamard spaces.

Let $(M,d)$ be a Hadamard space. Let $G:X\rightarrow (-\infty,\infty]$ be a convex lsc function. We will use the notation
\[
\Dom G=G^{-1}(\mathbb{R})\,.
\]
The \emph{slope} of $G$ is a function $|\partial G|:M\rightarrow [0,\infty]$:
\[
|\partial G|(y)=
\left\{
\begin{aligned}
\varlimsup_{z\to y}\frac{\max\{G(y)-G(z),0\}}{d(y,z)}\,,& \quad y\in \Dom(G)\,,\\
\infty\,,&\quad y\in G^{-1}(\infty)\,.
\end{aligned}
\right.
\]
It is a general fact that $|\partial G|$ is always lsc.  Moreover
\begin{equation}\label{eq:partialg}
|\partial G|(y)=\sup_{z\in M-\{y\}}\frac{\max\{G(y)-G(z),0\}}{d(y,z)}\,, \quad y\in \Dom(G)\,.
\end{equation}
See \cite{Bac14} Lemma~5.1.2 for a proof.

Inspired by the gradient flow on Hilbert spaces, we look for a gradient flow on a general Hadamard space as follows: given $c_0\in \Dom(G)$, we want to define a curve $c_t$ so that 
\[
|\dot{c}_t|:=\lim_{s\to t+} \frac{d(c_t,c_s)}{s-t}
\]
is as large as possible. That is, we hope that
\[
|\dot{c}_t|=|\partial G(c_t)|\,,\quad t>0\,.
\]
This is indeed possible, we recall the construction.

We define $c^{m,j}:[0,\infty)\rightarrow M$ ($m,j\in \mathbb{Z}_{\geq 0}$) by iteration:

1. $c^{m,0}_t=c_0$.

2. $c^{m,j+1}_t$ is the minimizer of
\[
v\mapsto \frac{1}{2}d(v,c_t^{m,j})^2+\frac{t}{m}G(v)\,.
\]

Set $c_t^m=c_t^{m,m}$. Set
\[
c_t=\lim_{m\to\infty}c_t^m\,.
\]

It is shown by Mayer (\cite{May98}) that the above procedure is well-defined, $c_t\in \Dom(G)$. The curve $c_t$ is called the \emph{weak gradient flow} of $G$ starting from $c_0$. See also \cite{Bac14} Theorem~5.1.6.

The curve $c_t$ has the following property:
\begin{equation}\label{eq:derg}
-\ddt G(c_t)=|\partial G(c_t)|^2=\left| \dot{c}_t \right|^2<\infty\,, \quad t> 0\,.
\end{equation}
Here the derivative on the left-hand side  is understood as the right derivative. In particular, $G(c_t)$ is right differentiable at $t>0$.
See \cite{Bac14} Theorem~5.1.13, \cite{AGS08} Theorem 2.4.15.
By \cite{Bac14} Proposition~5.1.14, $|\partial G(c_t)|$ is decreasing in $t\geq 0$, so $G(c_t)$ is convex in $t\geq 0$.

Moreover, the following evolution variation inequality holds (\cite{Bac14} Theorem~5.1.11)
\begin{equation}\label{eq:evv}
\frac{1}{2}\ddt d(c_t,v)^2\leq G(v)-G(c_t)\,,
\end{equation}
where $v\in \Dom(G)$. Here the left-hand side  is understood as the right upper derivative (Dini derivative), namely
\[
\ddt d(c_t,v)^2:=\varlimsup_{s\to t+}\frac{d(c_s,v)^2-d(c_t,v)^2}{s-t}\,.
\]
\begin{remark}
In \cite{Bac14}, this theorem is stated
for usual derivative and for almost all $t$. Moreover, it is shown that $d^2(c_t,v)$ is absolutely continuous. Our formulation follows easily from taking Dini derivative of the integral version of the theorem in \cite{Bac14}.
\end{remark}

Now fix a weak gradient flow $c_t$ with $c_0\in \Dom(G)$.
\begin{proposition}\label{prop:ulb}
Let $0< t<s$, then
\begin{equation}\label{eq:ulb}
|\partial G|(c_t) d(c_t,c_s)\geq G(c_t)-G(c_s)\geq |\partial G|(c_s)d(c_t,c_s)\,.
\end{equation}
Moreover, for $t=0$, the left-hand part of \eqref{eq:ulb} is still true, namely
\[
|\partial G|(c_0) d(c_0,c_s)\geq G(c_0)-G(c_s)\,.
\]
\end{proposition}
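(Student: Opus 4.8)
The plan is to treat the two inequalities separately, since the left-hand bound is essentially a tautology from the definition of the slope, whereas the right-hand bound requires integrating the energy identity \eqref{eq:derg}. For the left-hand inequality I would first dispose of the degenerate case: if $c_t=c_s$ then $d(c_t,c_s)=0$ and, by the monotonicity of $G$ along the flow, $G(c_t)=G(c_s)$, so the inequality is trivially an equality of zeros; hence I may assume $c_t\neq c_s$. Since $-\frac{d}{d\tau}G(c_\tau)=|\partial G|(c_\tau)^2\geq 0$ by \eqref{eq:derg}, the function $\tau\mapsto G(c_\tau)$ is non-increasing, so $G(c_t)\geq G(c_s)$ for $t<s$. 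Applying the supremum form of the slope \eqref{eq:partialg} at the point $y=c_t\in\Dom(G)$ with the competitor $z=c_s$ then gives
\[
|\partial G|(c_t)\geq \frac{\max\{G(c_t)-G(c_s),0\}}{d(c_t,c_s)}=\frac{G(c_t)-G(c_s)}{d(c_t,c_s)}\,,
\]
which is the asserted bound after clearing the denominator. This argument uses only $c_t\in\Dom(G)$ together with the monotonicity of $G$, so it applies verbatim with $t=0$ using the hypothesis $c_0\in\Dom(G)$, yielding the final displayed claim for $t=0$.

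For the right-hand inequality I fix $0<t<s$. By the cited monotonicity of $|\partial G|(c_\tau)$ the function $g(\tau):=G(c_\tau)$ is convex on $[0,\infty)$; being finite and convex it is locally Lipschitz on $(0,\infty)$, hence absolutely continuous on $[t,s]$, and \eqref{eq:derg} integrates to
\[
G(c_t)-G(c_s)=\int_t^s |\partial G|(c_\tau)^2\,d\tau\,.
\]
Since $|\partial G|(c_\tau)$ is decreasing, we have $|\partial G|(c_\tau)\geq |\partial G|(c_s)$ for every $\tau\in[t,s]$, and therefore
\[
G(c_t)-G(c_s)\geq |\partial G|(c_s)\int_t^s|\partial G|(c_\tau)\,d\tau\,.
\]
It then remains to observe that $\int_t^s|\partial G|(c_\tau)\,d\tau\geq d(c_t,c_s)$: by \eqref{eq:derg} the metric speed of the flow equals $|\partial G|(c_\tau)$, so this integral is the length of the curve $\tau\mapsto c_\tau$ on $[t,s]$, which dominates the distance between its endpoints. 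Combining the two displays gives the claim.

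The main obstacle is the sole non-formal step, the passage $\int_t^s|\partial G|(c_\tau)\,d\tau\geq d(c_t,c_s)$, that is, the identification of the integral of $|\partial G|(c_\tau)$ with the length of the flow curve. The subtlety is that \eqref{eq:derg} records the speed as a one-sided (right) limit, whereas the length is computed from the metric derivative; for the absolutely continuous curve $c_\tau$ these coincide almost everywhere, after which the inequality length $\geq$ distance is the standard triangle-inequality estimate. Rather than reprove this, I would invoke the basic theory of curves of maximal slope in \cite{AGS08} (and \cite{Bac14}) to justify the identification, so that the proof reduces to the two elementary monotonicity computations above.
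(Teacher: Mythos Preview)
Your argument is correct. The left-hand inequality is handled exactly as in the paper, via \eqref{eq:partialg}. For the right-hand inequality, however, your route is genuinely different from the paper's. The paper does not integrate the energy identity; instead it introduces the auxiliary function
\[
H(a)=\bigl(G(c_0)-G(c_a)\bigr)^2-\frac{\bigl(G(c_0)-G(c_s)\bigr)^2}{d(c_0,c_s)^2}\,d(c_0,c_a)^2
\]
on $[0,s]$, notes that $H(0)=H(s)=0$, picks a maximizer $x\in[0,s)$, and uses that the right upper Dini derivative of $H$ at $x$ is nonpositive. Computing this derivative with \eqref{eq:derg} for the first term and the evolution variation inequality \eqref{eq:evv} for the second term yields $\frac{(G(c_0)-G(c_s))^2}{d(c_0,c_s)^2}\geq |\partial G|(c_x)^2\geq |\partial G|(c_s)^2$, the last step by monotonicity of the slope. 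Your approach replaces the use of \eqref{eq:evv} by the single geometric fact that the length $\int_t^s|\dot c_\tau|\,d\tau$ dominates $d(c_t,c_s)$; the price is the identification of the right metric speed in \eqref{eq:derg} with the a.e.\ metric derivative of the locally Lipschitz flow curve, which you correctly flag and which is standard in \cite{AGS08}. Your argument is shorter and perhaps more transparent, while the paper's maximum-principle argument stays entirely at the level of one-sided derivatives and the EVI, sidestepping any appeal to the length structure of the curve.
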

\begin{proof}
The left-hand part of \eqref{eq:ulb} (including the case $t=0$) follows directly from \eqref{eq:partialg}. 

We prove the right-hand part.
To prove \eqref{eq:ulb}, without loss of generality, assume that $t=0$, that \eqref{eq:derg} holds also at $t=0$ and that $c_t$ is Lipschitz on $[0,\infty)$ (\cite{Bac14} Proposition~5.1.10).

Define two functions
\[
F(r)=\left(G(c_0)-G(c_r)\right)^2,\, L(r)=d(c_0,c_r)^2  \,,\quad r\geq 0\,,
\]

We may assume that $L(s)>0$, since otherwise, by \cite{Bac14} Proposition~5.1.14, $|\partial G|(c_t)$ is constant for $t\in [0,s]$, hence by \eqref{eq:derg} and the fact that $c_0=c_s$, this constant is indeed $0$. So the flow $c_t$ is just the constant at $c_0$, the result is obvious.

Define a function $H:[0,s]\rightarrow \mathbb{R}$ as follows:
\[
H(a)=F(a)-\frac{F(s)}{L(s)}L(a)\,.
\]
Obviously, $H(0)=H(s)=0$, $H$ is a usc function. Let $x\in [0,s)$ be a maximizer of $H$. 
Then the right upper derivative of $H$ at $x$ must be non-positive, namely
\[
0\geq \varlimsup_{y\to x+} \frac{H(y)-H(x)}{y-x}=F'(x)-\frac{F(s)}{L(s)}\varliminf_{y\to x+} \frac{L(y)-L(x)}{y-x}\geq F'(x)-\frac{F(s)}{L(s)} L'(x)\,,
\]
where in the second step, we made use of the fact that $F$ is right differentiable since $G$ is also right differentiable, as recalled after \eqref{eq:derg}. Here each derivative denotes the right upper derivative.

Since $G$ is right differentiable, by \eqref{eq:derg}, we have
\[
F'(x)=2(G(c_0)-G(c_x))|\partial G(c_x)|^2\geq 0 \,.
\]
By \eqref{eq:evv}, we also have
\[
0\leq L'(s)\leq 2\left(G(c_0)-G(c_s)\right)\,. 
\]
When $L'(x)=0$, we conclude $F'(x)=0$ as well. Hence either $F(x)=0$ or $|\partial G(c_x)|=0$. In both cases, \eqref{eq:ulb} is obvious.
When $L'(x)>0$,
\[
\frac{F(s)}{L(s)}L'(x)\geq F'(x) \geq L'(x)|\partial G(c_x)|^2\,.
\]
This concludes the proof of \eqref{eq:ulb} since $|\partial G(c_x)|$ is decreasing in $x$ (\cite{Bac14} Proposition~5.1.14).
\end{proof}

\begin{proposition}\label{prop:Bind}
Let $\phi_0,\psi_0\in \Dom(G)$. Let $\phi_t$ (resp. $\psi_t$) be the weak gradient flow of $G$ with initial value $\phi_0$ (resp. $\psi_0$). Then
\[
\lim_{t\to\infty} |\partial G|(\phi_t)=\lim_{t\to\infty} |\partial G|(\psi_t)\,.
\]
\end{proposition}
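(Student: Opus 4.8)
The plan is to identify each limit with the asymptotic decay rate of $G$ along the corresponding flow, and then to force these rates to agree by exploiting the fact that the two flows stay at bounded distance. Since $|\partial G|(\phi_t)$ and $|\partial G|(\psi_t)$ are decreasing in $t$ (\cite{Bac14} Proposition~5.1.14), the two limits exist; I write $a:=\lim_{t\to\infty}|\partial G|(\phi_t)$ and $b:=\lim_{t\to\infty}|\partial G|(\psi_t)$, both of which are finite (finiteness for $t>0$ comes from \eqref{eq:derg}) and nonnegative. By symmetry it suffices to prove $a=b$, and since the flows coincide from any time at which their values agree, I may assume $\phi_t\neq\psi_t$ for all $t$.

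First I would record the asymptotic rate. Integrating \eqref{eq:derg} over $[t_0,t]$ for a fixed $t_0>0$ gives
\[
G(\phi_t)=G(\phi_{t_0})-\int_{t_0}^t |\partial G|(\phi_r)^2\,dr\,,
\]
and since $|\partial G|(\phi_r)^2\to a^2$ a Ces\`aro-mean argument yields $t^{-1}G(\phi_t)\to -a^2$; likewise $t^{-1}G(\psi_t)\to -b^2$. Hence it remains only to show $G(\phi_t)-G(\psi_t)=o(t)$, and in fact I will show the stronger statement that it is bounded.

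The key input is that the two flows remain uniformly close. The gradient-flow semigroup of a convex function on a Hadamard space is nonexpansive, so $d(\phi_t,\psi_t)\leq d(\phi_0,\psi_0)$ for all $t$; this is standard for flows satisfying the evolution variational inequality and follows from \eqref{eq:evv} by the usual doubling-of-variables argument applied along the diagonal $s=t$, or one may simply invoke \cite{Bac14}. Granting this, I apply the supremum formula \eqref{eq:partialg} with the two flow points as its arguments: for $t>0$,
\[
G(\phi_t)-G(\psi_t)\leq |\partial G|(\phi_t)\,d(\phi_t,\psi_t)\,,\qquad G(\psi_t)-G(\phi_t)\leq |\partial G|(\psi_t)\,d(\phi_t,\psi_t)\,.
\]
Combining these with the monotonicity of the slopes and the contraction bound gives, for all $t\geq t_0$,
\[
|G(\phi_t)-G(\psi_t)|\leq \max\{|\partial G|(\phi_{t_0}),|\partial G|(\psi_{t_0})\}\,d(\phi_0,\psi_0)=:C<\infty\,.
\]
Dividing by $t$, letting $t\to\infty$, and using the two rate computations forces $-a^2+b^2=0$, whence $a=b$.

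The hard part will be the contraction estimate $d(\phi_t,\psi_t)\leq d(\phi_0,\psi_0)$: the excerpt supplies only the single-flow inequality \eqref{eq:evv}, so upgrading it to a two-flow statement requires the diagonal argument (differentiating $t\mapsto d(\phi_t,\psi_t)^2$ and feeding \eqref{eq:evv} into each slot), and the Dini-derivative bookkeeping along $s=t$ is the one genuinely delicate point; everything else is elementary once $|G(\phi_t)-G(\psi_t)|$ is seen to be bounded. I would therefore either cite the nonexpansiveness directly or isolate it as a short lemma, keeping the remainder of the argument as above.
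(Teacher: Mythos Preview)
Your argument is correct and in fact cleaner than the paper's. Both proofs rest on the same two ingredients: the integrated identity $G(\phi_t)=G(\phi_{t_0})-\int_{t_0}^t|\partial G|(\phi_r)^2\,dr$ coming from \eqref{eq:derg}, and the nonexpansiveness $d(\phi_t,\psi_t)\leq d(\phi_0,\psi_0)$, which the paper simply cites as \cite{Bac14} Theorem~5.1.6 (so your concern about this being ``the hard part'' is unwarranted in context---you can just cite it). The difference lies in how the two flows are compared. The paper argues by contradiction: assuming a gap $|\partial G|^2(\phi_t)\leq |\partial G|^2(\psi_t)-\delta$, it integrates to get $G(\psi_t)-G(\phi_t)\leq G(\psi_0)-G(\phi_0)-\delta t$, and then uses the integrated evolution variational inequality \eqref{eq:evv} together with nonexpansiveness to show $G(\psi_t)-G(\phi_{t+1})$ is bounded below, forcing a contradiction. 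You instead go directly: the slope formula \eqref{eq:partialg} plus nonexpansiveness immediately give $|G(\phi_t)-G(\psi_t)|\leq C$, and then the Ces\`aro computation of the asymptotic rate $t^{-1}G(\phi_t)\to -a^2$ finishes. Your route avoids the EVI entirely at this step and is a line shorter; the paper's route is perhaps more in the spirit of the surrounding EVI-based arguments but slightly more roundabout.
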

This is proved in \cite{He15} Corollary~2.2.
\begin{proof}
We may assume that the curves $\phi_t$ and $\psi_t$ do not intersect. 
Moreover, we may assume that \eqref{eq:derg} holds up to $t=0$.
Assume that the conclusion is not true, we may assume that there is a constant $\delta>0$, so that for all $t\geq 0$
\[
|\partial G|^2(\phi_t)\leq |\partial G|^2(\psi_t)-\delta\,.
\]

Now by \eqref{eq:evv},
\[
2(G(\psi_t)-G(\phi_{t+1}))\geq d(\psi_t,\phi_{t+1})^2-d(\psi_t,\phi_t)^2\geq -d(\psi_0,\phi_0)^2\,,
\]
where we have used the fact that $d(\phi_t,\psi_t)\leq d(\phi_0,\psi_0)$ in the second inequality (\cite{Bac14} Theorem~5.1.6).

Now by \eqref{eq:derg},
\[
G(\phi_t)-G(\phi_{t+1})\leq |\partial G|^2(\phi_0)\,.
\]
By \eqref{eq:derg},
\[
\left(G(\psi_t)-G(\phi_t)\right)-\left(G(\psi_0)-G(\phi_0)\right)=\int_0^t \left(|\partial G(\phi_s)|^2- |\partial G(\psi_s)|^2\right)\,\mathrm{d}s
\leq -\delta t\,.
\]
In all, we get
\[
-d(\psi_0,\phi_0)\leq -2\delta t+C
\]
for some constant $C$. This is a contradiction.
\end{proof}

\subsection{Moment-weight inequality}
Let $(M,d)$ be a Hadamard space. Let $G:M\rightarrow (-\infty,\infty]$ be a convex lsc function. Let $\mathcal{R}$ be the space of geodesic rays in $M$ emanating from a fixed point $x_0\in M$. Define $\gga:\mathcal{R}\rightarrow (-\infty,\infty]$ by
\begin{equation}
\gga(\ell):=\lim_{t\to\infty}\frac{G(\ell_t)}{t}\,.
\end{equation}
As before, we may identify $\mathcal{R}$ for different $x_0$, the $\gga$ functionals for different $x_0$ correspond to each other.

For $\ell\in \mathcal{R}$, let 
\begin{equation}
\|\ell\|:=d(\ell_0,\ell_1)\,.
\end{equation}
This agrees with the definition in \eqref{eq:pnorm} for the Hadamard space $\mathcal{E}^2$.

We denote the trivial ray in $\mathcal{R}$ by $0$.
\begin{proposition}\label{prop:DonHis}
\begin{equation}\label{eq:DHtemp1}
\inf_{x\in M} |\partial G|(x)\geq \sup_{\ell\in \mathcal{R}\setminus\{0\}}\frac{-\mathbf{G}(\ell)}{\|\ell\|}\,.
\end{equation}
\end{proposition}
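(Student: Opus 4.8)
The plan is to reduce the claimed inequality to a single pointwise comparison and then pass to the limit along a ray. Since the statement asserts $\inf_{x\in M}|\partial G|(x)\geq \sup_{\ell}\frac{-\mathbf{G}(\ell)}{\|\ell\|}$, it suffices to fix an arbitrary $x\in M$ and an arbitrary $\ell\in\mathcal{R}\setminus\{0\}$ and to prove
\[
|\partial G|(x)\geq \frac{-\mathbf{G}(\ell)}{\|\ell\|}\,.
\]
If $x\notin\Dom(G)$ the left side equals $+\infty$ and there is nothing to prove; likewise if $\mathbf{G}(\ell)=+\infty$ the right side equals $-\infty$. Thus I may assume $x\in\Dom(G)$ and $\mathbf{G}(\ell)\in\mathbb{R}$. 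Note that since $G$ is convex along geodesics and $\ell$ is a geodesic ray, $s\mapsto G(\ell_s)$ is convex, so its difference quotients are monotone and the limit $\mathbf{G}(\ell)=\lim_{t\to\infty}G(\ell_t)/t$ genuinely exists in $(-\infty,\infty]$.

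The key input is the supremum formula \eqref{eq:partialg} for the slope. Testing it against $z=\ell_t$ for $t>0$ gives
\[
|\partial G|(x)\geq \frac{\max\{G(x)-G(\ell_t),0\}}{d(x,\ell_t)}\geq \frac{G(x)-G(\ell_t)}{d(x,\ell_t)}\,.
\]
I then rewrite the right-hand side as $\dfrac{G(x)/t-G(\ell_t)/t}{d(x,\ell_t)/t}$ and let $t\to\infty$. The numerator tends to $-\mathbf{G}(\ell)$, since $G(x)/t\to 0$ and $G(\ell_t)/t\to\mathbf{G}(\ell)$. For the denominator I use that $\ell$ is a geodesic ray, so $d(\ell_0,\ell_t)=t\|\ell\|$, together with the triangle inequality $|d(x,\ell_t)-d(\ell_0,\ell_t)|\leq d(x,\ell_0)$; hence $d(x,\ell_t)/t\to\|\ell\|$. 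Combining, the right-hand side converges to $-\mathbf{G}(\ell)/\|\ell\|$, and since $|\partial G|(x)$ dominates each term it dominates the limit. Taking the infimum over $x$ and the supremum over $\ell$ then finishes the proof. Notice that this argument is base-point independent: it uses only the definition of $\mathbf{G}(\ell)$ for the fixed ray emanating from $x_0$, so it does not actually require the asserted identification of $\mathcal{R}$ across base points.

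The argument is essentially soft --- it is the \emph{easy direction} of a moment--weight type inequality --- so I do not expect a single hard step. The only points requiring care are: (i) justifying existence of the limit defining $\mathbf{G}(\ell)$, which follows from convexity of $t\mapsto G(\ell_t)$; (ii) isolating the degenerate cases $x\notin\Dom(G)$ and $\mathbf{G}(\ell)=+\infty$ first, so that no indeterminate ratio ever appears when passing to the limit; and (iii) the elementary observation that the base point $x$ and the ray's base point $\ell_0$ differ by a bounded distance, whence $d(x,\ell_t)/t$ shares the limit $\|\ell\|$ with $d(\ell_0,\ell_t)/t$. None of these constitutes a genuine obstacle; the content is entirely in applying \eqref{eq:partialg} and the definition of $\mathbf{G}$.
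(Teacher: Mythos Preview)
Your proof is correct and close in spirit to the paper's, but the execution differs in one useful way. The paper fixes the base point $x_0$ of the ray, bounds $-\mathbf{G}(\ell)$ by the right derivative $-\frac{d}{dt}\big|_{t=0+}G(\ell_t)$ via convexity, and then bounds that derivative by $|\partial G|(x_0)\,\|\ell\|$ using \eqref{eq:partialg}; to take the infimum over all points it then invokes the identification of $\mathcal{R}$ for different base points. You instead keep the ray fixed (emanating from $x_0$) and test the slope formula \eqref{eq:partialg} at an arbitrary $x\in M$, letting $t\to\infty$ directly and using only the triangle inequality to see $d(x,\ell_t)/t\to\|\ell\|$. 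Your route therefore never needs to move the base point of $\ell$, and, as you observe, the argument is genuinely independent of the parallel-transport identification of $\mathcal{R}$. This is a mild but real simplification over the paper's version.
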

\begin{proof}
Take $\ell\in \mathcal{R}\setminus\{0\}$.
Fix $x_0\in M$. Then
\[
-\gga(\ell)\leq -\left.\ddt\right|_{t=0+}G(\ell_t)\leq |\partial G|(x_0)\|\ell\|\,,
\]
where the first inequality follows from the convexity of $G$, the second inequality follows from \eqref{eq:partialg}. Since $x_0$ is arbitrary, the inequality follows.
\end{proof}

This is also known as the moment-weight inequality in the general GIT setting.
\subsection{Weak Calabi flow}\label{subsec:wcf}
In this subsection, we explore the weak Calabi flow following \cite{BDL16}.

Fix a compact K\"ahler manifold $X$ and a K\"ahler form $\omega$ as before.

The following theorem is the basis of this part.
\begin{theorem}
The space $\mathcal{E}^2(X,\omega)$ is a Hadamard space.
\end{theorem}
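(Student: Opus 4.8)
The goal is to show that $(\mathcal{E}^2(X,\omega), d_2)$ is a Hadamard space, i.e.\ that it is a complete geodesic metric space satisfying the $\mathrm{CAT}(0)$ inequality. Completeness is already in hand: $\mathcal{E}^2$ is by definition the metric completion of $(\mathcal{H},d_2)$ (Darvas, \cite{Dar15}), so that requirement is automatic. The fact that $\mathcal{E}^2$ is a geodesic metric space is also recalled in Section~\ref{subsec:wg} (\cite{Dar15} Theorem~4.17), where the $d_2$-geodesic between two points is realized as the finite energy geodesic obtained as a decreasing Demailly approximation limit, and is moreover unique for $p=2$. Thus the only substantive point to verify is the $\mathrm{CAT}(0)$ condition.

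The plan is to reduce the $\mathrm{CAT}(0)$ inequality on $\mathcal{E}^2$ to the corresponding nonpositive-curvature statement on the smooth Mabuchi space $\mathcal{H}$. The starting point is that $\mathcal{H}$, with the Mabuchi--Semmes--Donaldson $L^2$ metric, is a Fréchet--Riemann manifold of constant nonpositive sectional curvature (Calabi--Chen, Blocki \cite{Blo12}, recalled at the start of Section~\ref{sec:inv}). Concretely this is encoded by Chen's comparison inequality: for any geodesic triangle in $\mathcal{H}$ the $d_2$-distances satisfy the $\mathrm{CAT}(0)$ comparison against a Euclidean companion triangle. Since $C^{1,1}$ geodesic segments between points of $\mathcal{H}$ exist and are unique (\cite{Chen00}, \cite{CTW17}), the comparison inequality is literally the statement that the conclusion $d_2(a,b)\le |\bar a-\bar b|$ of the $\mathrm{CAT}(0)$ definition holds for all triangles with vertices in $\mathcal{H}$.

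To pass from $\mathcal{H}$ to its completion $\mathcal{E}^2$, I would argue by density and continuity. Fix three points $x,y,z\in\mathcal{E}^2$ and points $a,b$ on the $d_2$-geodesics $g_{xy}$, $g_{xz}$. Using the Demailly approximation described in Section~\ref{subsec:wg}, choose sequences $x_j,y_j,z_j\in\mathcal{H}$ with $x_j\to x$, $y_j\to y$, $z_j\to z$ in $d_2$; the corresponding smooth geodesic segments $g_{x_jy_j}$, $g_{x_jz_j}$ then converge in $d_2$ to $g_{xy}$, $g_{xz}$, by the stability of finite energy geodesics under decreasing approximation. Picking $a_j\in g_{x_jy_j}$, $b_j\in g_{x_jz_j}$ at the same arclength parameters as $a,b$, one has $a_j\to a$ and $b_j\to b$. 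For each $j$ the smooth comparison gives $d_2(a_j,b_j)\le |\bar a_j-\bar b_j|$, where the companion triangle has side lengths $d_2(x_j,y_j)$, $d_2(y_j,z_j)$, $d_2(z_j,x_j)$. Since the map sending three Euclidean side-lengths and two comparison parameters to the companion distance $|\bar a-\bar b|$ is continuous, and since all the $d_2$-distances converge, letting $j\to\infty$ yields $d_2(a,b)\le |\bar a-\bar b|$, which is exactly the $\mathrm{CAT}(0)$ inequality for the triangle in $\mathcal{E}^2$.

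The main obstacle is the convergence of the geodesic segments and the matched interior points under the approximation: I must know that the finite energy geodesic depends $d_2$-continuously on its endpoints and that points chosen at the same parameter converge accordingly. This is precisely where the results of \cite{Dar15} on the structure of $(\mathcal{E}^2,d_2)$ as a geodesic space, together with the uniqueness of $d_2$-geodesics for $p=2$, are essential; uniqueness guarantees the limit of the smooth segments is the geodesic $g_{xy}$ rather than merely some segment joining $x$ to $y$. Once this stability is secured, the $\mathrm{CAT}(0)$ property passes to the completion essentially for free, and combined with the already-established completeness and geodesic structure this establishes that $\mathcal{E}^2(X,\omega)$ is a Hadamard space.
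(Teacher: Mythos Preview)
The paper does not give a proof of this theorem; it simply records the statement and cites Darvas \cite{Dar17} (with a pointer to \cite{Gue14}). Your outline is essentially the argument in those references: the Calabi--Chen nonpositive-curvature comparison on $(\mathcal{H},d_2)$, followed by passage to the metric completion $\mathcal{E}^2$. So there is nothing to compare against in the paper itself, and your route is the standard one.

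One point worth tightening. You refer to the weak geodesic between two points of $\mathcal{H}$ as a ``smooth geodesic segment'', but these are only $C^{1,1}$ (\cite{CTW17}) and need not remain in $\mathcal{H}$; the intermediate points $a_j,b_j$ you pick already live in $\mathcal{E}^\infty\subset\mathcal{E}^2$, not in $\mathcal{H}$. So the comparison you invoke for ``triangles in $\mathcal{H}$'' is really a comparison for triangles in $\mathcal{E}^2$ whose \emph{vertices} lie in $\mathcal{H}$. This is exactly what Calabi--Chen prove, but they do it via $\epsilon$-geodesics (smooth approximate geodesics in $\mathcal{H}$) and a limiting argument, not by a direct Riemannian computation on the $C^{1,1}$ segment. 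The formal ``nonpositive sectional curvature of the Fr\'echet manifold $\mathcal{H}$'' does not by itself give the CAT(0) inequality, precisely because the $d_2$-geodesics are not smooth curves in $\mathcal{H}$; the $\epsilon$-geodesic detour is what bridges the gap. Once that is acknowledged, your density-and-continuity passage to $\mathcal{E}^2$ is fine (and indeed the general fact that the completion of an NPC length space is Hadamard covers it).
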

This result is proved by Darvas in \cite{Dar17}.
See also \cite{Gue14} Theorem~3.11, Theorem~3.6.

The weak Calabi flow is an analogue of the Calabi flow recalled in the introduction. By definition, the weak Calabi flow is the weak gradient flow of the functional $M$ on $\mathcal{E}^2$. See \cite{BDL16} Section~6 for a thorough treatment.

We recall that for an initial value $\phi_0\in \mathcal{H}$, the weak Calabi flow coincides with the Calabi flow on the maximal existence time interval of the latter  (\cite{BDL16} Proposition~6.1).

Now we define a functional $\overline{Ca}:\mathcal{E}^2\rightarrow [0,\infty]$ as $|\partial M|$. As recalled above, $\overline{Ca}$ is lsc.

\begin{proposition}
For $\phi\in \mathcal{H}$, 
\[
\overline{Ca}(\phi)=Ca(\phi)\,.
\]
\end{proposition}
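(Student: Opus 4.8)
The plan is to show that for a smooth K\"ahler potential $\phi \in \mathcal{H}$, the abstract metric slope $|\partial M|(\phi)$ coincides with the classical Calabi energy $Ca(\phi)$, where the former is defined by the variational formula \eqref{eq:partialg}. The natural strategy is to identify $|\partial M|(\phi)$ with the norm of the $L^2$-gradient of $M$ at $\phi$, then recognize that gradient as $-(S(\phi)-\bar S)$.

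First I would recall that at a smooth point $\phi \in \mathcal{H}$ the first variation of $M$ is the classical Mabuchi computation
\begin{equation}
\left.\ddt\right|_{t=0} M(\phi + t f) = -\frac{1}{V}\int_X f\,(S(\phi)-\bar S)\,\omega_\phi^n = -\langle f, S(\phi)-\bar S\rangle_\phi,
\end{equation}
so that the $L^2$-gradient of $M$ at $\phi$ is $\operatorname{grad} M(\phi) = -(S(\phi)-\bar S)$, whose $L^2$-norm is exactly $Ca(\phi)$ by \eqref{eq:defCa}. In a smooth Riemannian (here Fr\'echet--Riemannian) setting, the metric slope of a functional at a differentiable point equals the norm of its gradient, so morally $|\partial M|(\phi) = \|\operatorname{grad} M(\phi)\|_\phi = Ca(\phi)$. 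The content of the proposition is to make this rigorous in the singular metric space $\mathcal{E}^2$, using only the intrinsic slope formula.

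The two inequalities I would establish separately. For $|\partial M|(\phi) \le Ca(\phi)$, I would run the weak Calabi flow $\phi_t$ starting at $\phi_0 = \phi$, which by \cite{BDL16} coincides with the smooth Calabi flow for small $t$. Combining the left-hand inequality of Proposition~\ref{prop:ulb}, namely $|\partial M|(\phi_0)\,d_2(\phi_0,\phi_s) \ge M(\phi_0)-M(\phi_s)$, with the smooth identity $-\frac{d}{dt}M(\phi_t) = Ca(\phi_t)^2$ and $|\dot\phi_t|_{d_2} = Ca(\phi_t)$ (the flow speed equals the $L^2$-norm of $S-\bar S$), dividing by $d_2(\phi_0,\phi_s)$ and letting $s\to 0^+$ should yield $|\partial M|(\phi) \ge Ca(\phi)$; the reverse direction for this half comes from \eqref{eq:derg}, which gives $|\partial M|(\phi_t)^2 = |\dot\phi_t|^2 = Ca(\phi_t)^2$ for $t>0$, combined with lower semicontinuity of $|\partial M| = \overline{Ca}$ as $t\to 0^+$. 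For the opposite inequality $|\partial M|(\phi) \le Ca(\phi)$, I would use the variational formula \eqref{eq:partialg}: for any competitor $\psi$, bound $M(\phi)-M(\psi)$ by a first-order Taylor term plus a controlled error. The smooth directional derivative gives $M(\phi)-M(\psi)\le -\langle \psi-\phi, S(\phi)-\bar S\rangle_\phi + o(d_2(\phi,\psi))$, and Cauchy--Schwarz together with the equivalence $d_2 \asymp I_2$ from \eqref{eq:ulb1} controls the linear term by $Ca(\phi)\,d_2(\phi,\psi)$ up to the metric distortion constant, which one must show tends to $1$ as $\psi \to \phi$.

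The main obstacle I anticipate is the reverse inequality $|\partial M|(\phi)\le Ca(\phi)$, because the slope in \eqref{eq:partialg} is a supremum over \emph{all} $\psi\in\mathcal{E}^2$, including highly singular potentials far from the smooth locus, whereas the linearization of $M$ is only controlled along smooth or near-smooth directions. Controlling the second-order error of $M$ uniformly near $\phi$, and upgrading the metric-distortion constant $C$ in \eqref{eq:ulb1} to an asymptotically sharp comparison $d_2(\phi,\psi)/\|\psi-\phi\|_{L^2(\omega_\phi^n)}\to 1$ as $\psi\to\phi$, is where the real work lies; this is essentially the statement that the singular $d_2$ metric is asymptotically Riemannian at smooth points, which one can extract from Darvas's description of $d_2$ via the Pythagorean-type formula and the regularity of the geodesic segments emanating from the smooth point $\phi$.
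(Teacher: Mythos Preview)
Your Calabi-flow argument (the first block) is essentially the paper's proof and is correct; both inequalities already follow from it. The paper runs the weak Calabi flow from $\phi_0=\phi$, which agrees with the smooth Calabi flow for short time, and argues: (i) for $t>0$, \eqref{eq:derg} gives $\overline{Ca}(\phi_t)^2=-\dot M(\phi_t)=Ca(\phi_t)^2$ (the last equality being the classical first-variation formula), so lower semicontinuity of $\overline{Ca}$ yields $\overline{Ca}(\phi_0)\le Ca(\phi_0)$; (ii) from the left-hand side of Proposition~\ref{prop:ulb} one has $\overline{Ca}(\phi_0)\ge \frac{M(\phi_0)-M(\phi_t)}{d_2(\phi_0,\phi_t)}$, and the paper bounds the right side below by $Ca(\phi_t)$ using the smooth-flow version of \eqref{eq:ulb} (established via He's evolution variation inequality), then lets $t\to 0^+$. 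Your variant of (ii), taking $s\to0^+$ with the elementary Riemannian-length bound $d_2(\phi_0,\phi_s)\le\int_0^s Ca(\phi_r)\,dr$ and $M(\phi_0)-M(\phi_s)=\int_0^s Ca(\phi_r)^2\,dr$, is an equally valid and slightly more direct route to the same conclusion.

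Two remarks. First, your labeling is scrambled: you announce ``$|\partial M|(\phi)\le Ca(\phi)$'' for both paragraphs, while the first actually produces both directions. Second, and more substantively, the entire Taylor-expansion/Cauchy--Schwarz approach in your second paragraph---and the ``main obstacle'' you worry about (controlling the metric distortion constant, or bounding $M(\phi)-M(\psi)$ uniformly for singular $\psi\in\mathcal{E}^2$)---is unnecessary. The lsc-plus-\eqref{eq:derg} argument you already gave handles the inequality $\overline{Ca}(\phi)\le Ca(\phi)$ cleanly, without ever needing to compare $d_2$ to the tangent $L^2$-norm at singular competitors. You have solved the problem in your first paragraph; the difficulty you flag does not arise.
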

\begin{proof}
Recall that the evolution variation inequality also holds for the Calabi flow with smooth initial value (See \cite{He15} the equation below (2.4)). So \eqref{eq:ulb} also holds on the time interval where the Calabi flow is defined. Moreover, \eqref{eq:ulb} extends to $t=0$.

Now fix $\phi_t$ be a solution to the weak Calabi flow with $\phi_0\in \mathcal{H}$, since the flow coincides with the Calabi flow on a short time interval, we conclude that $M(\phi_t)$ is smooth in $t$ for small $t$, so by \eqref{eq:derg} and the fact that $\overline{Ca}$ is lsc,
\[
Ca(\phi_0)^2=-\dot{M}(\phi_0)\geq \overline{Ca}(\phi_0)^2\,.
\]
For the other inequality, by Proposition~\ref{prop:ulb},
\[
\overline{Ca}(\phi_0)\geq \frac{M(\phi_0)-M(\phi_t)}{d_2(\phi_t,\phi_0)}\geq Ca(\phi_t)\,.
\]
for $t>0$ small. Let $t\to 0+$, we conclude.
\end{proof}
From now on, we will no longer use the notation $\overline{Ca}$, we denote it simply as $Ca$.

Let $\phi_t$ be a solution to the weak Calabi flow with $M(\phi_0)<\infty$.
As we have recalled above, $Ca(\phi_t)$ is decreasing in $t$, so one can define
\begin{equation}\label{eq:Bcal}
B:=\lim_{t\to \infty}Ca(\phi_t)\,.
\end{equation}
According to Proposition~\ref{prop:Bind}, the value of $B$ is independent of the choice of $\phi_0$.

\subsection{Inverse Monge--Amp\`ere flow}
Now assume that we are in the Fano case, we recall the basic theory of the inverse Monge--Ampère flow following \cite{CHT17}.

The inverse Monge--Ampère flow is the gradient flow of $D$ on $\mathcal{H}$, namely,
\begin{equation}\label{eq:IMAF}
    \left\{
    \begin{aligned}
        \partial_t\varphi_t&=1-e^{\rho_{t}}\,,\\
        \left.\varphi_t\right|_{t=0}&=\varphi_0\,,
    \end{aligned}
    \right.
\end{equation}
where $\rho_t$ is short for $\rho_{\varphi_t}$. 
In the same spirit, we write $\omega_t=\omega_{\varphi_t}$.
We assume that $\varphi_0\in \mathcal{H}$.

\begin{theorem}[\cite{CHT17}]
The solution to \eqref{eq:IMAF} exists for $t\in [0,\infty)$ and is smooth.
\end{theorem}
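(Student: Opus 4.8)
The plan is to follow the standard continuation scheme for parabolic complex Monge--Amp\`ere equations, isolating the a priori $C^0$ bound as the one nontrivial ingredient. First I would recast \eqref{eq:IMAF} as a scalar parabolic equation. Combining $\Ric\omega_t-\omega_t=\ddc\rho_t$ with $\Ric\omega=\omega+\ddc\rho$ and $\Ric\omega_t=\Ric\omega-\ddc\log(\omega_t^n/\omega^n)$ gives $\rho_t=\rho-\varphi_t-\log(\omega_t^n/\omega^n)+c_t$, where $c_t$ is fixed by the normalization in \eqref{eq:defrho}, i.e. $e^{c_t}=V/\int_X e^{\rho-\varphi_t}\omega^n$. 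Hence $e^{\rho_t}=e^{c_t}e^{\rho-\varphi_t}\,\omega^n/\omega_t^n$, and \eqref{eq:IMAF} becomes
\[
\partial_t\varphi_t=1-\frac{V\,e^{\rho-\varphi_t}}{\int_X e^{\rho-\varphi_t}\,\omega^n}\cdot\frac{\omega^n}{\omega_t^n}\,.
\]
Since the right-hand side depends on $\varphi_t$ through $-\log\det(\omega_t)$, whose linearization is $\Delta_{\omega_t}$, this is a second-order equation of Monge--Amp\`ere type, uniformly parabolic as long as $\omega_t$ stays uniformly equivalent to $\omega$. Short-time existence and uniqueness then follow from the standard theory of quasilinear parabolic equations, and the usual continuation argument reduces long-time existence to showing that on every finite interval $[0,T]$ the solution stays in a fixed compact subset of $\mathcal{H}$ --- the key point being the $C^0$ estimate.

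The upper bound is immediate: since $e^{\rho_t}>0$ we have $\partial_t\varphi_t\le 1$, so $\sup_X\varphi_t\le\sup_X\varphi_0+t$ is locally bounded. The difficulty, and the main obstacle, is the lower bound. Differentiating $\inf_X\varphi_t$ shows it is controlled once $\sup_X e^{\rho_t}$ is bounded, i.e. once $\rho_t$ is bounded above; but the normalization pins down only the $\omega_t^n$-average of $e^{\rho_t}$, so this control is not formal. Here I would run the compactness argument of \cite{CHT17}, whose inputs are the monotone quantities along the flow: the normalization gives $\partial_t E(\varphi_t)=\frac{1}{V}\int_X(1-e^{\rho_t})\,\omega_t^n=0$, so $E(\varphi_t)=E(\varphi_0)$ is conserved, while $D(\varphi_t)$ is decreasing because $\varphi_t$ is the gradient flow of $D$. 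Conservation of $E$ together with the upper bound confines the normalized potentials $\varphi_t-\sup_X\varphi_t$ to a family of fixed energy on which $D$ is bounded; a compactness argument in $\mathcal{E}^1$ then extracts a uniform bound, and a Ko\l odziej-type $L^\infty$ estimate for the Monge--Amp\`ere equation converts it into a uniform oscillation bound $\mathrm{osc}_X\varphi_t\le C(T)$, which with the upper bound yields the two-sided $C^0$ estimate.

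Once $\|\varphi_t\|_{C^0([0,T])}$ is controlled, the densities in the reformulated equation are bounded away from $0$ and $\infty$, so a parabolic Aubin--Yau second-order computation --- applying the maximum principle to a quantity such as $\log\operatorname{tr}_\omega\omega_t-A\varphi_t$ --- yields the Laplacian estimate and hence the uniform equivalence $C^{-1}\omega\le\omega_t\le C\omega$. This makes the flow uniformly parabolic, so the parabolic Evans--Krylov theorem gives a $C^{2,\alpha}$ estimate, and parabolic Schauder theory with bootstrapping gives uniform $C^k$ bounds for every $k$ on $[0,T]$. Feeding these estimates into the continuation argument extends the smooth solution to all of $[0,\infty)$.
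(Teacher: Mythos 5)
Your overall architecture is the right one, and it is the one the paper implicitly endorses: the paper contains no proof of this statement (it is quoted verbatim from \cite{CHT17}), and its surrounding discussion describes exactly your scheme --- the flow is a second-order Monge--Amp\`ere-type parabolic equation, so short-time existence is standard, and long-time existence reduces to an \emph{a priori} $C^0$ bound on finite time intervals, after which the Laplacian estimate, Evans--Krylov and Schauder bootstrapping are routine. Your reformulation of \eqref{eq:IMAF} as $\partial_t\varphi_t=1-e^{c_t}e^{\rho-\varphi_t}\,\omega^n/\omega_t^n$ with $e^{c_t}=V/\int_X e^{\rho-\varphi_t}\,\omega^n$ is correct, as are the conservation of $E$ and the monotonicity of $D$ along the flow.

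The genuine gap is in the one step that carries all the weight: the lower $C^0$ bound. As written, ``a compactness argument in $\mathcal{E}^1$ then extracts a uniform bound, and a Ko{\l}odziej-type $L^\infty$ estimate converts it into a uniform oscillation bound'' does not work. First, compactness in $\mathcal{E}^1$ can never ``extract a uniform bound'': limits of bounded-energy sequences are merely finite-energy functions, which may be unbounded, so no $L^\infty$ information comes out of compactness alone --- this is why the actual argument in \cite{CHT17} is a contradiction argument using the flow structure, and why the paper remarks (Section~\ref{sec:rmk}) that no quantitative bound comes out of it. Second, Ko{\l}odziej's estimate needs the density $\omega_t^n/\omega^n=e^{c_t+\rho-\varphi_t}/(1-\partial_t\varphi_t)$ to be bounded in $L^p$ for some $p>1$: the numerator involves $e^{-\varphi_t}$, precisely the quantity you are trying to bound (the Ding monotonicity controls $\int_X e^{-\varphi_t+\rho}\,\omega^n$ but not $\int_X e^{-p\varphi_t}\,\omega^n$ for $p>1$, and uniform integrability at exponent $p>1$ fails in general since Tian's alpha invariant need not exceed $1$), while the denominator requires an upper bound on $\rho_t$, i.e.\ a lower bound on $\partial_t\varphi_t$, which you correctly flag as ``not formal'' but never actually establish --- and which is needed again for uniform parabolicity in your Evans--Krylov step. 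The monotone quantities you invoke (plus $M$ decreasing, hence an entropy bound) give only $L\log L$ control of the density, below Ko{\l}odziej's threshold. The soft route the paper itself points out is different from yours: the Moser--Trudinger inequality of \cite{BB11} combined with Ko{\l}odziej's estimate, which yields a lower bound on $\inf_X\varphi_t$ that is only exponential in $t$ but still suffices on every finite time interval.
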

One could of course define the weak gradient flow of $D$ as we did for $M$. But due to this theorem and a similar argument as \cite{BDL16} Proposition~6.1, the weak flow and the inverse Monge--Amp\`ere flow are exactly the same when the initial value lies in $\mathcal{H}$. As we will see, this is enough for our purpose.

Fix a smooth solution $\varphi_t$ to \eqref{eq:IMAF}. 
Note the following 
\[
-\ddt D(\varphi_t)=R(\varphi_t)^2\,.
\]
\begin{proposition}\label{prop:aprio}
\begin{enumerate}
    \item $E$ is constant along \eqref{eq:IMAF}.
    \item $R$ is decreasing along \eqref{eq:IMAF}.
    \item $M$ is decreasing along \eqref{eq:IMAF}.
\end{enumerate}
\end{proposition}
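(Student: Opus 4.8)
The plan is to handle the three monotonicity statements separately; each reduces to a first-variation computation once the flow equation \eqref{eq:IMAF} is inserted, and for the decay of $R$ I would invoke the abstract machinery of Section~\ref{sec:weakg}. For (1), I would use that the differential of the Monge--Amp\`ere energy is the Monge--Amp\`ere measure, so $\frac{d}{dt}E(\varphi_t)=\frac1V\int_X\dot\varphi_t\,\omega_t^n$. Substituting $\dot\varphi_t=1-e^{\rho_t}$ gives $\frac{d}{dt}E(\varphi_t)=\frac1V\big(\int_X\omega_t^n-\int_X e^{\rho_t}\omega_t^n\big)$, and the normalization $\int_X(e^{\rho_t}-1)\omega_t^n=0$ from \eqref{eq:defrho} forces $\int_X e^{\rho_t}\omega_t^n=V$; hence the derivative vanishes and $E$ is constant.

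For (3) I would compute directly from the first variation of the Mabuchi functional, $\frac{d}{dt}M(\varphi_t)=-\frac1V\int_X\dot\varphi_t\,(S(\varphi_t)-\bar S)\,\omega_t^n$. In the Fano normalization $\bar S=n$, and tracing $\Ric\omega_t-\omega_t=\ddc\rho_t$ with respect to $\omega_t$ yields the pointwise identity $S(\varphi_t)-\bar S=\Delta_t\rho_t$, with $\Delta_t$ the $\omega_t$-Laplacian. Inserting $\dot\varphi_t=-(e^{\rho_t}-1)$ and integrating by parts gives $\frac{d}{dt}M(\varphi_t)=\frac1V\int_X(e^{\rho_t}-1)\,\Delta_t\rho_t\,\omega_t^n=-\frac1V\int_X e^{\rho_t}\,|\nabla\rho_t|_{\omega_t}^2\,\omega_t^n\le 0$, so $M$ decreases.

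For (2) I would avoid differentiating $R$ directly, which is where the main difficulty lies. Because $\varphi_0\in\mathcal{H}$, the smooth solution of \eqref{eq:IMAF} coincides with the weak gradient flow of $D$ on $\mathcal{E}^2$, so the abstract theory applies. Combining the identity $-\frac{d}{dt}D(\varphi_t)=R(\varphi_t)^2$ recorded just before the statement with \eqref{eq:derg}, which reads $-\frac{d}{dt}D(\varphi_t)=|\partial D|(\varphi_t)^2$, I obtain $R(\varphi_t)=|\partial D|(\varphi_t)$; since the slope of a convex lsc function is nonincreasing along its weak gradient flow (\cite{Bac14} Proposition~5.1.14, recalled after \eqref{eq:derg}), $R(\varphi_t)$ is decreasing. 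The point of this detour is that a self-contained direct proof would require the variation of the Ricci potential, $\dot\rho_t=-\Delta_t\dot\varphi_t-\dot\varphi_t+c(t)$, where the time-dependent constant $c(t)$ is pinned down by differentiating $\int_X(e^{\rho_t}-1)\omega_t^n=0$; tracking this constant and then verifying $\frac{d}{dt}R(\varphi_t)^2\le 0$ is exactly the fiddly computation that the gradient-flow argument circumvents.
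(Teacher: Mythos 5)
Your proof is correct, so the question is how it compares to the paper's, and here the comparison is slightly unusual: the paper offers no argument of its own for Proposition~\ref{prop:aprio}, it simply cites \cite{CHT17}. Measured against that reference, your parts (1) and (3) are the same standard first-variation computations (the normalization in \eqref{eq:defrho} kills $\tfrac{d}{dt}E(\varphi_t)$, and the trace identity $S(\varphi_t)-\bar S=\Delta_t\rho_t$ together with integration by parts gives $\tfrac{d}{dt}M(\varphi_t)=-\tfrac1V\int_X e^{\rho_t}|\nabla\rho_t|^2_{\omega_t}\,\omega_t^n\le 0$); your signs and the use of $\bar S=n$ check out. Part (2) is where you genuinely diverge: \cite{CHT17} proves the monotonicity of $R$ by the direct parabolic computation you describe as fiddly (evolving $e^{\rho_t}-1$, which requires the variation of the Ricci potential and its normalizing constant), whereas you identify $R(\varphi_t)=|\partial D|(\varphi_t)$ via \eqref{eq:derg} and then quote the monotonicity of the slope along weak gradient flows (\cite{Bac14} Proposition~5.1.14). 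This is legitimate and, importantly, not circular: the identification of the smooth solution of \eqref{eq:IMAF} with the weak gradient flow of $D$, asserted in the paper via an argument modeled on \cite{BDL16} Proposition~6.1, rests on the EVI characterization and hence only on geodesic convexity of $D$ on $\mathcal{E}^2$ and smoothness of the flow — not on any statement of Proposition~\ref{prop:aprio}. Your route buys exactly what you claim (no Ricci-potential bookkeeping) at the cost of heavier inputs (the Hadamard-space machinery and the smooth-equals-weak identification); note also that it inverts the paper's logic, since the paper deduces convexity of $t\mapsto D(\varphi_t)$ from (2), while for you that convexity is part of the abstract package, so nothing downstream is lost. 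In fact your argument for (2) is the same device the paper itself uses in Section~\ref{subsec:wcf} to prove that the slope $\overline{Ca}$ agrees with $Ca$ on $\mathcal{H}$, so it is entirely in the spirit of the paper, just applied to $D$ instead of $M$.
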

See \cite{CHT17} for a proof.

According to Proposition~\ref{prop:aprio}, $D$ is convex and decreasing along the flow.
Define
\begin{equation}\label{eq:B}
B:=\lim_{t\to \infty} R(\varphi_t)\in [0,\infty)\,.
\end{equation}
Again, $B$ is independent of the choice of $\varphi_0$.

\begin{remark}\label{rmk:Bpositive}
When $B>0$ ($B$ is defined in \eqref{eq:B}), $X$ does not admit K\"ahler--Einstein metrics. Otherwise, as is well-known, the K\"ahler--Einstein metric is a global minimizer of $D$, and as $D$ is convex and decreasing along $\varphi_t$, we infer that $B=0$, this is a contradiction. 

The same remark applies to the weak Calabi flow setting. Hence if $B>0$ ($B$ is defined in \eqref{eq:Bcal}), there is no cscK metric.
\end{remark}

\section{Proof of the main theorem}

\subsection{Analogue in finite dimensions}
Let us explain the idea of the proof in the finite dimensional setting. 

Let $G:\mathbb{R}^n\rightarrow \mathbb{R}$ be a smooth convex function. We may consider the gradient flow of $G$, namely
\[
\dot{x}_t=-\nabla G(x_t)\,.
\]
It is well-known that for any initial value $x_0\in \mathbb{R}^n$, there is always a smooth global solution.

Following the general theory of Hadamard spaces, we define the boundary $\mathbb{R}^n(\infty)$ as the set of equivalence classes of unit speed rays (in the usual sense) in $\mathbb{R}^n$, two rays are considered as equivalent if they are parallel in the sense that they are related by a translation. There is an obvious identification $\mathbb{R}^n(\infty)$ with the unit sphere $S^{n-1}$.

We can define a radial version of $G$, namely $\gga:\mathbb{R}^n(\infty)\rightarrow (-\infty,\infty]$ as follows: let $[\ell]\in \mathbb{R}^n(\infty)$,
take $x\in \mathbb{R}^n$, take a representative of $\ell$ of $[\ell]$ that emanates from $x$, define
\begin{equation}
\gga([\ell])=\lim_{t\to\infty}\frac{G(\ell_t)}{t}\,.
\end{equation}
It is easy to show that $\gga$ is independent of the choice of $x$. See the proof of \cite{DL18} Lemma~4.10.

Fix a solution to the flow, say $x_t$. Set $G(t)=G(x_t)$.

Then we claim that 
\begin{equation}\label{eq:toprove}
\left(-\lim_{t\to\infty} \dot{G}(t)\right)^{1/2}=\max\left\{0,\sup_{[\ell]\in \mathbb{R}^n(\infty)}
-\gga([\ell])\right\}\,.
\end{equation}
Let $\ell$ be a unit speed ray emanating from $x\in \mathbb{R}^n$. Then by Proposition~\ref{prop:DonHis}, we have
\[
-\gga([\ell])\leq=\left(-\dot{G}(0)\right)^{1/2}\,.
\]
Since $x$ is arbitrary, we conclude
\[
\left(-\lim_{t\to\infty} \dot{G}(t)\right)^{1/2}\geq \max\left\{0,\sup_{[\ell]\in \mathbb{R}^n(\infty)}
-\gga([\ell])\right\}\,.
\]

For the inverse direction, we may assume that
\begin{equation}\label{eq:Gdot}
\left(-\lim_{t\to\infty} \dot{G}(t)\right)^{1/2}>0\,.
\end{equation}
In this case, $|x_0-x_t|\to \infty$ as $t\to \infty$. Otherwise, let $y$ be a limit point of $x_t$, it is easy to see that $G(y)$ obtains the minimial value of $G$.  It is a general fact of the gradient flow that the left-hand side  of \eqref{eq:Gdot} is independent of the choice of $x_0$ (Proposition~\ref{prop:Bind}), so we find a contradiction by considering the flow starting at $y$.

By Proposition~\ref{prop:ulb},
we have the following control for $0\leq t<s$,
\[
\left(-\dot{G}(s)\right)^{1/2}\leq \frac{G(t)-G(s)}{|x_t-x_s|}
\leq 
\left(-\dot{G}(t)\right)^{1/2}\,.
\]

Now we claim that the sup on right-hand side  of \eqref{eq:toprove} is indeed obtained by a special direction $\ell^{\infty}$. The construction is as follows: connect $x_0$ and $x_s$ by a unit speed segment $\ell^{s}:[0,|x_0-x_s|]\rightarrow \mathbb{R}^n$. Fix $T>0$, it easy to see that the images of the maps $\ell^{s}|_{[0,T]}$  all lie in a fixed compact set when $s\geq T$, so we may take $s_i\to \infty$ so that the corresponding $\ell^{s_i}$ tends to another segment uniformly. Combining this with a Cantor diagonal argument, we arrive at a subsequence $s_i\to \infty$, so that the corresponding $\ell^{s_i}$ converge to a ray $\ell^{\infty}$ in uniformly on each compact time interval. We then calculate for $0<A<s$ that
\[
\left(-\lim_{t\to\infty} \dot{G}(t)\right)^{1/2}\leq  \left(- \dot{G}(s)\right)^{1/2}\leq \frac{G(0)-G(s)}{|x_0-x_s|}\leq 
\frac{G(0)-G(\ell^s_A)}{A}\,.
\]
Let $s\to \infty$ along the subsequence $s_i$ used to define $\ell^{\infty}$, we find
\[
\left(-\lim_{t\to\infty} \dot{G}(t)\right)^{1/2}\leq \frac{G(0)-G(\ell^{\infty}_A)}{A}\,.
\]
Let $A\to \infty$, we conclude
\[
\left(-\lim_{t\to\infty} \dot{G}(t)\right)^{1/2}\leq -\gga([\ell^{\infty}])\,.
\]
Hence equality in \eqref{eq:toprove} indeed holds.

It is not hard to generalize the proof to a general locally compact Hadamard space and to lsc and convex $G$. But in the situation we are interested in, the underlying space is $\mathcal{E}^2$, which is not locally compact. So one need some additional compactness theorem. In $\mathcal{E}^2$, the compactness is usually lacking, so we instead apply the compactness theorem for the level set of $H$ in $\mathcal{E}^1$ proved in \cite{BBEGZ16}. The details will be treated in the subsequent subsections.

\subsection{An abstract version}\label{subsec:abs}
Let $(M,d)$ be a Hadamard space.
Let $\sigma$ be a topology on $M$. We say $\sigma$ is \emph{compatible} with $(M,d)$ if the followings hold:
\begin{enumerate}
    \item $\sigma$ is a Hausdorff topology.
    \item $\sigma$ is weaker that the $d$-topology. Moreover, let $x_j$ be a bounded sequence in $(M,d)$, such that $x_j\to x\in M$ with respect to the $\sigma$-topology. Then $x_j\to x$ with respect to the weak topology.
    \item For any bounded $\sigma$-converging sequences $x_j\to x$, $y_j\to y$ in $M$,
    \[
        d(x,y)\leq \varliminf_{j\to\infty} d(x_j,y_j)\,.
    \]
    \item Let $(x^j_t)_{t\in [0,1]}$ be geodesics in $M$ for any $j\geq 1$. Assume that there are $x_0,x_1\in M$, such that $x_0^j\to x_0$, $x_1^j\to x_1$ in $\sigma$-topology. Let $(x_t)_{t\in [0,1]}$ be the geodesic from $x_0$ to $x_1$. Then for any $t\in [0,1]$, $x^j_t\to x_t$ in $\sigma$-topology.
\end{enumerate}

\begin{theorem}\label{thm:abst}
Let $(M,d)$ be a Hadamard space. Let $\sigma$ be a topology on $M$ compatible with $(M,d)$.
Let $F,G:M\rightarrow(-\infty,\infty]$ be two convex lsc functions such that $F\leq G$ and such that $G$ is decreasing along the gradient flow of $F$. Fix an arbitrary point $x_0\in \Dom G$.
Assume that for any constant $C>0$, the following set
\[
\mathcal{K}_{C}:=\left\{\,x\in M: d(x,x_0)\leq C, G(x)\leq C \,\right\}\subseteq M\,.
\]
is $\sigma$-sequentially compact. 
Then
\begin{equation}\label{eq:tbp}
\inf_{x\in M} |\partial F|(x)=\max\left(0,\max_{\ell\in \mathcal{R}\setminus\{0\}}\frac{-\mathbf{F}(\ell)}{\|\ell\|}\right)\,.
\end{equation}
\end{theorem}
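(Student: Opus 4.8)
The plan is to identify both sides of \eqref{eq:tbp} with the asymptotic slope of the gradient flow of $F$, and then to realize the maximizing ray explicitly, exactly as in the finite-dimensional model, with the $\sigma$-compactness of the sets $\mathcal{K}_C$ playing the role that local compactness plays in $\mathbb{R}^n$. First I would set $B:=\inf_{x\in M}|\partial F|(x)$ and show it is computed by the flow. Since $F\le G$ we have $\Dom G\subseteq\Dom F$, so the gradient flow $c_t$ of $F$ with $c_0=x_0$ is well defined; by the monotonicity of the slope recalled after \eqref{eq:derg}, $|\partial F|(c_t)$ decreases, and by Proposition~\ref{prop:Bind} its limit is independent of the starting point. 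Running the flow from an arbitrary $y\in\Dom F$ gives $|\partial F|(y)\ge\lim_{t\to\infty}|\partial F|(c_t)$, while along $c_t$ itself this limit equals $\inf_t|\partial F|(c_t)$; hence $B=\lim_{t\to\infty}|\partial F|(c_t)$. The easy half of \eqref{eq:tbp} is then immediate: Proposition~\ref{prop:DonHis} applied to $F$ gives $B\ge\sup_{\ell}-\mathbf{F}(\ell)/\|\ell\|$, and trivially $B\ge 0$, so the left side dominates the right side.

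It remains to produce a ray attaining the supremum. If $B=0$, the easy half already yields $\sup_\ell-\mathbf{F}(\ell)/\|\ell\|\le 0$, so the right side of \eqref{eq:tbp} equals $0=B$ and we are done; thus assume $B>0$. For each $s>0$ let $\ell^s\colon[0,d(x_0,c_s)]\to M$ be the unit-speed geodesic from $x_0$ to $c_s$. Combining the right-hand inequality of \eqref{eq:ulb} (with $t=0$) with $|\partial F|(c_s)\ge B$ gives $F(x_0)-F(c_s)\ge B\,d(x_0,c_s)$, and convexity of $F$ along $\ell^s$ then yields, for each fixed $A$ and all large $s$,
\[
F(\ell^s_A)\le F(x_0)-B A .
\]
Since $-\tfrac{\mathrm{d}}{\mathrm{d}t}F(c_t)=|\partial F|^2(c_t)\ge B^2>0$ by \eqref{eq:derg}, we have $F(c_s)\to-\infty$ and hence $d(x_0,c_s)\to\infty$, so $\ell^s_A$ is eventually defined. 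For the compactness input I would use $G$: since $G$ decreases along the flow, $G(c_s)\le G(x_0)$, and convexity of $G$ along $\ell^s$ gives $G(\ell^s_A)\le G(x_0)$; hence for fixed $A$ the points $\ell^s_A$ all lie in $\mathcal{K}_{C}$ with $C=\max(A,G(x_0))$, which is $\sigma$-sequentially compact.

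Finally I would extract the limiting ray and pass to the limit. Applying $\sigma$-sequential compactness of $\mathcal{K}_C$ for $A=1,2,\dots$, together with a diagonal argument and property (4) of a compatible topology (convergence of geodesics with $\sigma$-converging endpoints), I obtain a subsequence $s_i\to\infty$ and a geodesic ray $\ell^\infty$ with $\ell^\infty_A=\lim_i\ell^{s_i}_A$ in $\sigma$ for every $A\ge 0$. Property (2) turns this $\sigma$-convergence of the bounded sequence $\ell^{s_i}_A$ into weak convergence, so the weak lower semicontinuity of the convex lsc function $F$ (the remark after Proposition~\ref{prop:weakineq}) gives $F(\ell^\infty_A)\le\varliminf_i F(\ell^{s_i}_A)\le F(x_0)-BA$; letting $A\to\infty$ yields $\mathbf{F}(\ell^\infty)\le-B$. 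Because $F(\ell^\infty_A)\to-\infty$ the ray is non-constant, so $\|\ell^\infty\|>0$, while property (3) gives $\|\ell^\infty\|=d(\ell^\infty_0,\ell^\infty_1)\le 1$; therefore $-\mathbf{F}(\ell^\infty)/\|\ell^\infty\|\ge B$. Combined with the easy half this forces equality and shows the maximum is attained at $\ell^\infty$ (and, a posteriori, that $\ell^\infty$ has unit speed and $\mathbf{F}(\ell^\infty)=-B$).

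The hard part will be precisely this last extraction step. In the non-locally-compact space $\mathcal{E}^2$ there is no Arzel\`a--Ascoli, so the construction of $\ell^\infty$ must be driven entirely by the hypothesised $\sigma$-compactness of $\mathcal{K}_C$ and by the four axioms of a compatible topology; one must verify that the limit is genuinely a non-trivial geodesic ray (not a collapsed point) and that every semicontinuity inequality points in the direction needed to carry the bound $F(\ell^s_A)\le F(x_0)-BA$ into the limit, all without controlling the speed of $\ell^\infty$ in advance.
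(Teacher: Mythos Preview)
Your proposal is correct and follows essentially the same strategy as the paper's proof: construct the maximizing ray as a $\sigma$-limit of geodesic chords from $x_0$ to $c_s$, using the $\sigma$-compactness of $\mathcal{K}_C$ (fed by convexity and flow-monotonicity of $G$) for the extraction, and then pass the convexity inequality for $F$ through weak lower semicontinuity. The only notable organizational difference is that the paper splits into ``flow bounded'' versus ``flow unbounded'' (producing a minimizer of $F$ in the first case), whereas you split into $B=0$ versus $B>0$; your split is slightly cleaner since it bypasses the need to locate a minimizer. One small technical point: you invoke the right-hand inequality of Proposition~\ref{prop:ulb} at $t=0$, but as stated that inequality is only asserted for $t>0$; the paper handles this by replacing $x_0$ with $x_\epsilon$ for small $\epsilon>0$, and you should do the same (or note that the inequality passes to $t=0$ by letting $t\to 0^+$, using that $F(c_t)$ is decreasing).
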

Here $\mathcal{R}$ denotes the space of all geodesic rays emanating from $x_0$ and $0$ denotes the trivial ray in $\mathcal{R}$.
The functional $\mathbf{F}:\mathcal{R}\rightarrow (-\infty,\infty]$ is defined by
\begin{equation}\label{eq:genF}
\mathbf{F}(\ell):=\lim_{t\to\infty}\frac{F(\ell_t)}{t}\,.
\end{equation}
The norm of a geodesic ray $\ell$ is defined as
\[
\|\ell\|:=d(\ell_0,\ell_1)\,.
\]
As before, we identify $\mathcal{R}$ with respect to different $x_0$.
The functional $\mathbf{F}$ does not depend on the choice of $x_0$.

\begin{proof}
Let $(x_t)_{t\geq 0}$ be the gradient flow of $F$ with starting point $x_0$. 

\textbf{Case 1}. Assume that $d(x_0,x_t)$ is bounded.

In this case, by our assumption, the set $\{x_t:t\in [0,\infty)\}$ is weakly relatively compact. In particular, we can take $t_j\to \infty$ ($j\geq 1$), such that $x_{t_j}$ converges weakly to $x_{\infty}\in M$ as $j\to\infty$. By \cite{Bac13} Lemma~3.1, $F$ is weakly lsc, so 
\[
F(x_{\infty})\leq \varliminf_{j\to\infty}F(x_{\infty})\,.
\]
By \cite{Bac14} Proposition~5.1.12, we conclude that $x_{\infty}$ is indeed a minimizer of $F$. 
Also observe that by the same argument, $G(x_{\infty})<\infty$.
In particular, we can replace $x_0$ by $x_{\infty}$. In this case, both sides of \eqref{eq:tbp} are $0$.

\textbf{Case 2}. Assume that $d(x_0,x_t)$ is not bounded. Then we can take $t_i\to \infty$ ($i\geq 1$) so that $d(x_0,x_{t_i})\to \infty$. Replacing $x_0$ with $x_{\epsilon}$ for a small $\epsilon>0$, we may assume that Proposition~\ref{prop:ulb} holds up to $t=0$.

For each $t\geq 0$, let $(\ell^t_s)_{s\in [0,d(x_0,x_t)]}$ be the unit-speed geodesic segment from $x_0$ to $x_t$. By the convexity of $G$, we get
\[
G(\ell^t_s)\leq \frac{d(x_0,x_t)-s}{d(x_0,x_t)}G(x_0)+\frac{s}{d(x_0,x_t)}G(x_t)\,.
\]
By our assumption, $G(x_t)\leq G(x_0)$. So
\[
G(\ell^t_s)\leq G(x_0)<\infty\,.
\]
For a fixed $s_0$, we can take large enough $i$ so that $d(x_0,x_{t_j})>s_0$ for any $j\geq i$. Then there is a constant $C>0$ so that $\ell^{t_j}_s\in \mathcal{K}_C$ for any $j\geq i$, $s\in [0,s_0]$. By the compactness assumption, the Ascoli--Arzelà theorem (\cite{AGS08} Proposition~3.3.1) and the diagonal argument, after possibly replacing $t_j$ by a subsequence, we may assume that there is a geodesic ray $\ell^{\infty}\in \mathcal{R}$, such that
$\ell^{t_j}_s$ $\sigma$-converges to $\ell^{\infty}_s$ as $j\to\infty$ for all $s\geq 0$.

Fix $s\geq 0$, when $t_j\geq s$,
\[
\inf_{x\in M}|\partial F|(x)\leq |\partial F|(x_{t_j})\leq \frac{F(x_0)-F(x_{t_j})}{d(x_0,x_{t_j})}\leq \frac{F(x_0)-F(\ell^{t_j}_s)}{s}\,,
\]
where the second inequality follows from Proposition~\ref{prop:ulb}, the third follows from the convexity of $F$. 
Let $j\to\infty$, since $F$ is weakly lsc, we get
\[
\inf_{x\in M}|\partial F|(x)\leq  \frac{F(\ell^{\infty}_0)-F(\ell^{\infty}_s)}{s}\,,
\]

Let $s\to\infty$, we conclude that
\[
\inf_{x\in M}|\partial F|(x)\leq -\mathbf{F}(\ell^{\infty})\,.
\]
When $\ell^{\infty}$ is trivial, we conclude immediately. Now assume that $\ell^{\infty}$ is not trivial. By Proposition~\ref{prop:weakineq}, $\|\ell^{\infty}\|\leq 1$. So
\[
\inf_{x\in M}|\partial F|(x)\leq -\mathbf{F}(\ell^{\infty})\leq \frac{-\mathbf{F}(\ell^{\infty})}{\|\ell^{\infty}\|}\leq \inf_{x\in M}|\partial F|(x)\,,
\]
where the last inequality follows from Proposition~\ref{prop:DonHis}. Now \eqref{eq:tbp} follows.
\end{proof}
As a by-product of the proof, we find that if 
\[
\inf_{x\in M} |\partial F|(x)>0\,,
\]
then
\begin{equation}\label{eq:e2}
\|\ell^{\infty}\|=1\,.
\end{equation}

We call the geodesic rays that minimizes $\mathbf{F}(\ell)/\|\ell\|$ the \emph{Darvas--He geodesic rays}.

\begin{corollary}\label{corollary:stronger}
Assume that
\begin{equation}\label{eq:uns1}
\max_{\ell\in \mathcal{R}\setminus\{0\}}\frac{-\mathbf{F}(\ell)}{\|\ell\|}>0
\end{equation}
and that the maximizer is unique. Then for any $s\geq 0$, $\ell_s^t$ constructed in the previous proof starting from $x_{\epsilon}$ for any $\epsilon>0$ converges to $\ell^{\infty}_s$ in $M$ as $t\to \infty$, where $\ell^{\infty}$ is moved parallelly so that $\ell^{\infty}_0=x_{\epsilon}$.
\end{corollary}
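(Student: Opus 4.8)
The corollary strengthens the conclusion of Theorem~\ref{thm:abst} under the uniqueness assumption. In the proof of the theorem, the convergence of $\ell^{t_j}_s$ to $\ell^\infty_s$ was only established along a subsequence $t_j\to\infty$, and only in the $\sigma$-topology. The corollary claims two improvements: first, that convergence holds along the full family as $t\to\infty$ (not merely along a subsequence); and second, that convergence holds in the metric $d$ of $M$ (not merely in the weaker $\sigma$-topology). Both improvements should follow from the uniqueness of the maximizer together with the extra rigidity \eqref{eq:e2} coming from \eqref{eq:uns1}.

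**The proof plan.**

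The plan is to combine a subsequence-extraction argument with uniqueness to upgrade to full convergence, and then to use the norm-rigidity \eqref{eq:e2} to upgrade $\sigma$-convergence to $d$-convergence. Let me lay out the key steps.

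\textbf{Step 1: Full $\sigma$-convergence of the endpoints.} First I would argue that the whole family $x_t$, suitably renormalized, produces a single limiting ray. The content of the proof of Theorem~\ref{thm:abst} (Case 2) is that \emph{every} sequence $t_j\to\infty$ with $d(x_\epsilon,x_{t_j})\to\infty$ admits a further subsequence along which $\ell^{t_j}_s$ $\sigma$-converges to some geodesic ray which is a maximizer of $-\mathbf{F}(\ell)/\|\ell\|$. Under \eqref{eq:uns1}, the assumption $\inf_{x\in M}|\partial F|(x)>0$ is automatic, so by the by-product \eqref{eq:e2} every such limit ray has norm $1$ and is therefore a genuine maximizer. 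Since the maximizer is \emph{unique} by hypothesis, every subsequential limit equals the same ray $\ell^\infty$. A standard subsequence argument (if a sequence does not converge, extract a subsequence staying away from the limit, then extract a convergent sub-subsequence to contradict uniqueness) then promotes this to full convergence: $\ell^t_s \to \ell^\infty_s$ in $\sigma$ as $t\to\infty$, for every fixed $s\geq 0$.

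\textbf{Step 2: Upgrading from $\sigma$-convergence to $d$-convergence.} This is where I expect the main obstacle to lie. From property (3) of a compatible topology one gets only the \emph{lower} semicontinuity $d(\ell^\infty_0,\ell^\infty_s)\leq \varliminf_t d(\ell^t_0,\ell^t_s)$, which is the wrong direction for metric convergence. The idea is to control the matching upper bound using that all the $\ell^t$ are \emph{unit-speed} geodesics from the common point $x_\epsilon$, so $d(x_\epsilon,\ell^t_s)=s$ for all $t$ (once $d(x_\epsilon,x_t)\geq s$), and likewise $d(x_\epsilon,\ell^\infty_s)=s$ since $\|\ell^\infty\|=1$ by \eqref{eq:e2}. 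Thus the quantities $d(x_\epsilon,\ell^t_s)$ already converge to $d(x_\epsilon,\ell^\infty_s)$. In a CAT(0) space, if $\sigma$-convergence implies weak convergence (property (2)) and one has convergence of distances to a fixed basepoint together with weak convergence, one obtains strong convergence. Concretely, I would use the formula $d(\ell^t_s,\ell^\infty_s)^2 \le d(x_\epsilon,\ell^t_s)^2 + d(x_\epsilon,\ell^\infty_s)^2 - 2\langle \cdot,\cdot\rangle$ via the CAT(0) quasi-linearization, or more directly invoke the Kadec--Klee type property: in a Hadamard space a sequence converging weakly to $x$ with $d(x_0,\cdot)\to d(x_0,x)$ converges strongly. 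The verification of this last implication, and the bookkeeping ensuring the parabolic segments are genuinely unit-speed up to the relevant length, is the technical heart and the step most likely to require care.

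\textbf{Step 3: Assembling the conclusion.} Combining Step~1 (full $\sigma$-convergence, hence full weak convergence by property (2)) with Step~2 (the distances to $x_\epsilon$ match in the limit) yields full $d$-convergence $\ell^t_s\to\ell^\infty_s$ in $M$ as $t\to\infty$, after translating $\ell^\infty$ so that $\ell^\infty_0=x_\epsilon$. The independence of the basepoint $x_\epsilon$ (for each $\epsilon>0$) follows because the gradient flow started at $x_\epsilon$ is just a time-shift of the flow started at $x_0$, and the parallel transport isometry $P_{x_0,x_\epsilon}$ identifies the respective maximizing rays.

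Among these, the delicate point is Step~2: the compatibility axioms give distance lower semicontinuity but not continuity, so genuinely upgrading to metric convergence must exploit both the CAT(0) geometry and the unit-speed normalization forced by \eqref{eq:e2}. I would expect this to be where the argument concentrates, with Steps~1 and~3 being essentially formal once uniqueness and \eqref{eq:e2} are in hand.
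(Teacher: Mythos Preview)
Your proposal is correct and follows essentially the same route as the paper. The paper's proof reduces strong convergence to weak convergence by citing \cite{Bac14} Proposition~3.1.6 (precisely the Kadec--Klee type fact you describe: weak convergence together with convergence of distances to a fixed point implies strong convergence in a Hadamard space) combined with \eqref{eq:e2}, and then establishes full weak convergence by the same subsequence-plus-uniqueness argument you outline in Step~1. The only cosmetic difference is that the paper argues directly at the level of weak convergence (extracting $\sigma$-convergent subsequences and invoking compatibility property~(2) to pass to weak limits) rather than first proving full $\sigma$-convergence as an intermediate step; your ordering is equally valid.
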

\begin{proof}
We use the same notations as in the proof of Theorem~\ref{thm:abst}. By \eqref{eq:uns1}, we are in Case 2. By replacing $x_0$ by $x_{\epsilon}$, we may set $\epsilon=0$.

By \cite{Bac14} Proposition~3.1.6, Theorem~\ref{thm:abst} and \eqref{eq:e2}, it suffices to prove that for any $s\geq 0$, $\ell^{t}_s$ converges weakly to $\ell^{\infty}_s$ as $t\to \infty$. 
For this purpose, it suffices to prove that for any sequence $t_i\to\infty$, we can find a subsequence $t_{n_i}\to \infty$ such that
$\ell^{t_{n_i}}_s$ converges weakly to $\ell^{\infty}_{s}$.

Due to \eqref{eq:uns1}, we have
\[
\lim_{i\to \infty}d(x_0,x_{t_i})=\infty\,.
\]
So we can construct a Darvas--He geodesic $\ell$ from a subsequence $t_{n_i}$. We know that $\ell^{t_{n_i}}_{s}$ converges weakly  to $\ell_{s}\in M$.
By the uniqueness of the maximizer, we conclude that $\ell_{s}=\ell^{\infty}_s$. The result follows.
\end{proof}

\subsection{Proof of Theorem~\ref{thm:main}}
Now to get Theorem~\ref{thm:main}, one takes $(M,d)$ to be $(\mathcal{E}^2,d_2)$, $G=M$ and $F$ is $M$ for the weak Calabi flow, $D$ for the inverse Monge--Amp\`ere flow. 
It remains to check the compactness properties of $\mathcal{K}_C$.

\begin{lemma}\label{lma:aux}
Let $\varphi_j$ ($j\in \mathbb{N}$) be a bounded sequence in $\mathcal{E}^2$. Let $\varphi\in \mathcal{E}^1$. Assume that $\varphi_j\to \varphi$ in $\mathcal{E}^1$. Then $\varphi\in \mathcal{E}^2$. Moreover, for any $\psi\in \mathcal{E}^2$, 
\[
d_2(\psi,\varphi)\leq \varliminf_{j\to\infty}d_2(\psi,\varphi_j)\,.
\]
\end{lemma}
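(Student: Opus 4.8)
The plan is to deduce both assertions from a single statement: that the $d_2$-bounded sequence $(\varphi_j)$ converges weakly (i.e.\ $\Delta$-converges) to $\varphi$ in the Hadamard space $(\mathcal{E}^2,d_2)$. Once this is known, $\varphi$ is a genuine point of $\mathcal{E}^2$, being a weak limit of points of $\mathcal{E}^2$, which is the first claim; and the inequality is then exactly Proposition~\ref{prop:weakineq} applied to the weakly convergent sequence $x_n=\varphi_j$ with limit $x=\varphi$ and fixed point $y=\psi$. So everything reduces to proving the weak convergence, and I would organise the argument around that reduction rather than trying to estimate $d_2$ by hand.

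To prove the weak convergence, I would first use that a $d_2$-bounded sequence in the Hadamard space $\mathcal{E}^2$ is weakly sequentially precompact: every subsequence admits a $\Delta$-convergent sub-subsequence whose limit lies in $\mathcal{E}^2$, since asymptotic centers of bounded sequences exist and are unique (the content recalled just before Proposition~\ref{prop:weakineq}). By the sub-subsequence principle together with uniqueness of asymptotic centers, it then suffices to show that every weak cluster point $\zeta\in\mathcal{E}^2$ of $(\varphi_j)$ equals $\varphi$. For this I would bring in pluripotential theory: the $d_2$-bound forces a uniform energy bound, so $(\varphi_j)$ ranges in a finite-energy sublevel set that is relatively compact in $L^1(X)$; combined with the hypothesis $\varphi_j\to\varphi$ in $\mathcal{E}^1$, hence in $L^1$, any $L^1$-cluster point of $(\varphi_j)$ is forced to be $\varphi$. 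It then remains to identify the metric cluster point $\zeta$ with this $L^1$-cluster point, i.e.\ to know that $\Delta$-convergence in $\mathcal{E}^2$ entails $L^1$-convergence of the underlying potentials on $d_2$-bounded sets; granting this, $\zeta=\varphi$, which simultaneously yields $\varphi\in\mathcal{E}^2$ and the weak convergence.

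The main obstacle is precisely this last matching step. $\Delta$-convergence is a purely metric notion in the \emph{nonlinear} space $\mathcal{E}^2$, whereas $L^1$-convergence refers to the \emph{linear} structure of potentials, and there is no formal reason for a metric weak limit to be the limit computed in $L^1$. I expect to resolve it either by invoking the description of the weak topology on $\mathcal{E}^2$ from the work of Darvas--Lu, or by proving directly, from the $L^1$-compactness of finite-energy sublevel sets together with uniqueness of limits, that a bounded $\Delta$-convergent sequence converges in $L^1$ to its weak limit. I would also record why the apparently more elementary route is inadequate: applying Fatou's lemma to the two terms of $I_2(\psi,\varphi_j)$ and using the comparison \eqref{eq:ulb1} only recovers $d_2(\psi,\varphi)\le C^2\,\varliminf_j d_2(\psi,\varphi_j)$ with the multiplicative constant $C$ of \eqref{eq:ulb1}, so it cannot give the sharp inequality with constant $1$; moreover the varying-measure term $\int_X|\psi-\varphi_j|^2\,\omega_{\varphi_j}^n$ is itself delicate to pass to the limit. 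This is exactly why routing the proof through weak convergence and Proposition~\ref{prop:weakineq} is essential.
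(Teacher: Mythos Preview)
Your reduction to weak ($\Delta$-)convergence followed by Proposition~\ref{prop:weakineq} is exactly the paper's endgame. The difference is in how you get there, and your route has the implications inverted in a way that creates a real gap.

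The paper first proves $\varphi\in\mathcal{E}^2$ by a direct pluripotential argument: set $w_j=\operatorname*{sup*}_{i\ge j}\varphi_i$, so $w_j\downarrow\varphi$ a.e.\ by the Choquet lemma; then bound $\int_X|w_j|^2\,\omega^n$ and $\int_X|w_j|^2\,\omega_{w_j}^n$ using $w_j\ge\varphi_j$, the $d_2$-boundedness of $(\varphi_j)$, and \eqref{eq:ulb1}. Via Darvas's monotone-approximation lemma this yields $\varphi\in\mathcal{E}^2$. Only \emph{after} that does the paper cite \cite{BDL17} Theorem~5.3: a $d_2$-bounded sequence that $d_1$-converges to a point of $\mathcal{E}^2$ converges weakly to that point. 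That is the direction actually available in the literature.

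You instead try to obtain $\varphi\in\mathcal{E}^2$ for free from the identification $\zeta=\varphi$, which forces you to prove that $\Delta$-convergence in $\mathcal{E}^2$ entails $L^1$-convergence on bounded sets. There is no reason to expect this: already in Hilbert space weak convergence does not imply convergence in any weaker metric. Worse, if you attempt to close the gap by passing to an $L^1$-convergent sub-subsequence and then invoking \cite{BDL17} to match its weak limit with its $L^1$-limit, you must know that the $L^1$-limit lies in $\mathcal{E}^2$ --- precisely the first assertion of the lemma. The argument becomes circular.

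The fix is simple: keep your skeleton, but insert the envelope argument to get $\varphi\in\mathcal{E}^2$ first; then apply \cite{BDL17} in the direction it actually goes ($d_1$-convergence plus $d_2$-boundedness $\Rightarrow$ weak convergence), and finish with Proposition~\ref{prop:weakineq} as you planned.
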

\begin{proof}
Since $\varphi_j \to \varphi$ in $\mathcal{E}^1$, we know that 
\begin{equation}\label{eq:t}
\varphi_j\to \varphi \,\, a.e.\,, \quad \left|\sup_X \varphi_j\right|\leq C\,.
\end{equation}
Define
\[
w_j=\operatorname*{sup*}_{\!\!\!\!i\geq j} \varphi_i\,.
\]
Then \eqref{eq:t} together with the Choquet lemma implies that $w_j$ decreases and converges to $\varphi$ a.e.. 

According to \cite{Dar15} Lemma~4.16, in order to prove that $\varphi\in \mathcal{E}^2$, it suffices to prove that $d_2(0,w_j)$ is bounded. According to \eqref{eq:ulb}, this is equivalent to prove
\[
\int_X |w_j|^2 \,\omega^n \leq C\,, \quad \int_X |w_j|^2 \,\omega^n_{w_j} \leq C\,.
\]
For the former, it suffices to consider the negative part of $w_t$, which is bounded from below by $\varphi_j$, so it suffices to prove
\[
\int_X |\varphi_j|^2 \,\omega^n\leq C\,. 
\]
This follows again from \eqref{eq:ulb} and the assumption that $\varphi_j$ is bounded in $\mathcal{E}^2$.

For the latter, according to \cite{GZ07} and \eqref{eq:ulb}, we have
\[
\int_X |w_j|^2 \,\omega^n_{w_j}\leq C\int_X |\varphi_j|^2 \,\omega^n_{\varphi_j}+C\leq C\,.
\]
So we conclude that $\varphi\in \mathcal{E}^2$.

According to \cite{BDL17} Theorem~5.3. $\varphi$ is the weak limit of $\varphi_j$. So we conclude by Proposition~\ref{prop:weakineq}.
\end{proof}

Recall the following version of the compactness theorem of \cite{BBEGZ16}.
\begin{theorem}\label{thm:BBEGZc}
For any $C>0$, $\varphi_0\in \mathcal{E}^1$, the set
\[
K_C:=\{\varphi\in \mathcal{E}^1: M(\varphi)\leq C, d_1(\varphi,\varphi_0)\leq C\}\subseteq \mathcal{E}^1
\]
is compact with respect to the strong topology.
\end{theorem}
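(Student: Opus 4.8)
The plan is to deduce this statement from the fundamental entropy compactness theorem of \cite{BBEGZ16}, the genuine analytic input being supplied entirely by that reference. Recall first that on $\mathcal{E}^1$ the strong topology is precisely the $d_1$-metric topology (\cite{Dar15} Proposition~5.9), so that the closed $d_1$-ball $\{\varphi:d_1(\varphi,\varphi_0)\leq C\}$ is strongly closed, and the entropy sublevel sets of \cite{BBEGZ16} are strongly compact once a $d_1$-bound is imposed. Concretely, the result I invoke is that for every $C'>0$ the set $\{\varphi\in\mathcal{E}^1: H(\varphi)\leq C',\ d_1(\varphi,\varphi_0)\leq C\}$ is strongly compact; this is the version of the BBEGZ compactness theorem adapted to the $d_1$-normalization.

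The first step is therefore to convert the bound $M(\varphi)\leq C$ into a bound on the entropy $H$. Writing $M=H+\bar{S}E-nE_R$ and using $H\geq 0$, I have on $K_C$
\[
0\leq H(\varphi)=M(\varphi)-\bar{S}E(\varphi)+nE_R(\varphi)\leq C+|\bar{S}|\,|E(\varphi)|+n\,|E_R(\varphi)|\,.
\]
Thus it suffices to bound $E$ and $E_R$ on the $d_1$-ball of radius $C$. Both are energy-type functionals that are $d_1$-Lipschitz on $d_1$-balls: for $E$ this is the standard estimate of \cite{BB10}, \cite{Dar15}, while for $E_R$ one dominates $\Ric\,\omega$ by $A\omega$ for $A\gg 0$, writes $E_R$ as a combination of twisted energies with respect to the K\"ahler forms $A\omega\pm\Ric\,\omega$, and applies the same Lipschitz estimate to each (\cite{BDL17} Section~4.2). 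Hence there is $C'=C'(C)>0$ with $H(\varphi)\leq C'$ for all $\varphi\in K_C$, so $K_C$ is contained in the strongly compact set furnished by \cite{BBEGZ16}; in particular $K_C$ is strongly relatively compact.

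It remains to check that $K_C$ is strongly closed, for then a closed subset of a compact set is compact. The ball $\{d_1(\cdot,\varphi_0)\leq C\}$ is closed since $d_1$ metrizes the strong topology. For $\{M\leq C\}$ I use that $M$ is strongly lower semicontinuous: indeed $H$ is strongly lsc (\cite{BDL17} Theorem~4.7) while $E$ and $E_R$ are strongly continuous, so $M=H+\bar{S}E-nE_R$ is lsc. Intersecting the two closed sets shows that $K_C$ is closed, and combined with relative compactness this yields compactness.

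The main obstacle is not any single hard estimate --- the deep compactness statement is borrowed from \cite{BBEGZ16} --- but rather the bookkeeping that makes the Mabuchi sublevel set interchangeable with an entropy sublevel set. The one point genuinely requiring care is the boundedness of $E_R$ on $d_1$-balls, since $\Ric\,\omega$ is only a fixed smooth closed $(1,1)$-form and need not be a K\"ahler form; the reduction to K\"ahler forms via $A\omega\pm\Ric\,\omega$ resolves this, and it is precisely the resulting $d_1$-Lipschitz bounds that couple the two constraints defining $K_C$ and let the $d_1$-bound absorb the energy terms.
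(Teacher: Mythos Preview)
Your proof is correct and follows essentially the same route as the paper: convert the Mabuchi bound into an entropy bound by controlling $E$ and $E_R$ on $d_1$-balls (the paper cites \cite{DH17} Proposition~2.5 for this, while you spell out the $A\omega\pm\Ric\,\omega$ trick), apply the BBEGZ entropy compactness theorem, and close up using the lower semicontinuity of $M$. The only cosmetic difference is that the paper invokes the original $\sup$-normalized form of \cite{BBEGZ16} Theorem~2.17 and therefore inserts an extra step (\cite{DDNL18b} Lemma~3.9) to pass from the $d_1$-bound to a bound on $|\sup_X\varphi|$, whereas you invoke a $d_1$-adapted formulation directly.
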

\begin{proof}
Let $\varphi\in \mathcal{E}^1$ be a potential such that $M(\varphi)\leq C$, $d_1(\varphi,\varphi_0)\leq C$.

By \cite{DH17} Proposition~2.5\footnote{It was only stated for $\varphi\in \mathcal{H}$, but since $E_R$ is continuous on $\mathcal{E}^1$, it also holds for $\varphi\in \mathcal{E}^1$.}, $H(\varphi)\leq C$ for a constant $C_1$. Moreover, according to \cite{DDNL18b} Lemma~3.9, 
\[
\left|\sup \varphi\right|\leq C_2\,.
\]
So according to \cite{BBEGZ16} Theorem~2.17, Proposition~2.6, for any sequence $\varphi_j\in K_C$, up to selecting a subsequence, we may assume that $\varphi_j$ converges to $\varphi\in \mathcal{E}^1$ in the strong topology. Now as $M$ is lsc, we conclude that
\[
M(\varphi)\leq C\,,
\]
so $\varphi\in K_C$. This concludes the proof.
\end{proof}
\begin{corollary}\label{cor:BBEGZc}
For any $C>0$, $\varphi_0\in \mathcal{E}^2$, the set
\[
\mathcal{K}_C:=\left\{\varphi\in \mathcal{E}^2: M(\varphi)\leq C\,,\,\, d_2(\varphi,\varphi_0)\leq C\right\}\subseteq \mathcal{E}^2
\]
is compact with respect to $d_1$-topology.
\end{corollary}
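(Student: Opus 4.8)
The plan is to deduce this from the $\mathcal{E}^1$-compactness already recorded in Theorem~\ref{thm:BBEGZc}, by first embedding $\mathcal{K}_C$ into a set of the form $K_{C'}$ and then showing that a $d_1$-limit of a sequence in $\mathcal{K}_C$ stays in $\mathcal{K}_C$. The embedding uses that $d_1$ is dominated by $d_2$: since each $\frac{1}{V}\omega_\varphi^n$ is a probability measure, Jensen's inequality gives $I_1(\varphi,\psi)\leq C_0 I_2(\varphi,\psi)$ for a constant $C_0$ depending only on $(X,\omega)$, so by the comparison \eqref{eq:ulb1} one gets $d_1(\varphi,\psi)\leq C_0'\, d_2(\varphi,\psi)$ after adjusting the constant. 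Applying this with $\psi=\varphi_0$ shows every $\varphi\in\mathcal{K}_C$ satisfies $d_1(\varphi,\varphi_0)\leq C_0' C$, hence $\mathcal{K}_C\subseteq K_{C'}$ with $C'=\max(C,C_0'C)$, where $K_{C'}$ is taken with base point $\varphi_0$. By Theorem~\ref{thm:BBEGZc}, $K_{C'}$ is compact in the strong (that is, $d_1$) topology.

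Given a sequence $\varphi_j\in\mathcal{K}_C$, Theorem~\ref{thm:BBEGZc} then lets me extract a subsequence (not relabelled) converging in $d_1$ to some $\varphi\in\mathcal{E}^1$, and it remains to verify the three defining conditions of $\mathcal{K}_C$ for $\varphi$. First, the sequence is bounded in $\mathcal{E}^2$, since $d_2(\varphi_j,0)\leq d_2(\varphi_j,\varphi_0)+d_2(\varphi_0,0)\leq C+d_2(\varphi_0,0)$; thus Lemma~\ref{lma:aux} applies and yields at once that $\varphi\in\mathcal{E}^2$ and that $d_2(\psi,\varphi)\leq\varliminf_{j\to\infty}d_2(\psi,\varphi_j)$ for every $\psi\in\mathcal{E}^2$. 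Taking $\psi=\varphi_0$ gives $d_2(\varphi,\varphi_0)\leq\varliminf_j d_2(\varphi_j,\varphi_0)\leq C$. Finally, since $M$ is lsc with respect to the strong topology and $d_1$-convergence is strong convergence, $M(\varphi)\leq\varliminf_j M(\varphi_j)\leq C$. Hence $\varphi\in\mathcal{K}_C$, so $\mathcal{K}_C$ is sequentially compact in the $d_1$-topology, and as this topology is metrizable, sequential compactness is compactness.

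The only delicate point is the passage to the limit that keeps $\varphi$ inside $\mathcal{E}^2$ with the prescribed $d_2$-bound, and this is exactly where Lemma~\ref{lma:aux} does the work: it converts $d_2$-boundedness of the sequence together with $d_1$-convergence into membership in $\mathcal{E}^2$ and lower semicontinuity of $d_2(\varphi_0,\cdot)$ along $d_1$-convergent sequences. The comparison $d_1\leq C_0' d_2$ and the lower semicontinuity of $M$ are routine, and all the genuinely hard analytic input has already been isolated in Theorem~\ref{thm:BBEGZc} and Lemma~\ref{lma:aux}, so I expect no further obstacle.
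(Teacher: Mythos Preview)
Your proof is correct and follows essentially the same approach as the paper: embed $\mathcal{K}_C$ into a set $K_{C'}$ via the comparison $d_1\lesssim d_2$, extract a $d_1$-convergent subsequence by Theorem~\ref{thm:BBEGZc}, then use Lemma~\ref{lma:aux} to recover membership in $\mathcal{E}^2$ together with the $d_2$-bound, and the lower semicontinuity of $M$ for the entropy bound. The paper's proof is terser and leaves the embedding step and the $M$-lsc step implicit, but the argument is the same.
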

\begin{proof}
Let $\varphi_j\in \mathcal{K}_C$. By Theorem~\ref{thm:BBEGZc}, up to selecting a subsequence, we may assume that $\varphi_j$ converges to $\varphi\in \mathcal{E}^1$ in the $d_1$-topology. Moreover, $M(\varphi)\leq C$.
Then according to Lemma~\ref{lma:aux}, we have $\varphi\in \mathcal{E}^2$ and $d_2(\varphi,\varphi_0)\leq C$.
\end{proof}
\begin{proposition}\label{prop:compd1}
The $d_1$-topology on $\mathcal{E}^2$ is compatible with $(\mathcal{E}^2,d_2)$.
\end{proposition}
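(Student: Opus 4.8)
The plan is to verify, one by one, the four conditions in the definition of a compatible topology from Section~\ref{subsec:abs}, taking $\sigma$ to be the $d_1$-topology on $M=\mathcal{E}^2$ and $d=d_2$. Conditions (1) and (2) are quick. Condition (1) holds because $d_1$ is a genuine metric on $\mathcal{E}^2$, hence the $d_1$-topology is Hausdorff. For (2), the quasi-isometry \eqref{eq:ulb1} together with the H\"older estimate $\int_X|\varphi-\psi|\,\omega_\varphi^n\le V^{1/2}\bigl(\int_X|\varphi-\psi|^2\,\omega_\varphi^n\bigr)^{1/2}$ yields $d_1\le C\,d_2$ on $\mathcal{E}^2$, so the $d_1$-topology is weaker than the $d_2$-topology. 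The remaining half of (2) — that a $d_2$-bounded, $d_1$-convergent sequence converges weakly in the Hadamard space $(\mathcal{E}^2,d_2)$ — is exactly the content of Lemma~\ref{lma:aux}, whose proof identifies the $d_1$-limit with the weak limit via \cite{BDL17} Theorem~5.3.

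For condition (3), let $x_j\to x$ and $y_j\to y$ in $d_1$ with $\{x_j\}$ and $\{y_j\}$ $d_2$-bounded. By condition (2) both sequences converge weakly in $(\mathcal{E}^2,d_2)$. I would then pass to the product Hadamard space $\mathcal{E}^2\times\mathcal{E}^2$ equipped with the $\ell^2$-product metric, in which weak convergence is componentwise, so that $(x_j,y_j)$ converges weakly to $(x,y)$. The distance function $(u,v)\mapsto d_2(u,v)$ is convex and continuous on this product, hence weakly lower semicontinuous by the lemma underlying Proposition~\ref{prop:weakineq} (\cite{Bac13} Lemma~3.1). Therefore
\[
d_2(x,y)\le \varliminf_{j\to\infty} d_2(x_j,y_j)\,,
\]
which is precisely (3).

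The main obstacle is condition (4), the stability of finite energy geodesics, because here the endpoints $x_0^j\to x_0$ and $x_1^j\to x_1$ converge only in the weaker metric $d_1$; consequently the CAT(0) convexity of $d_2$ along geodesics is useless, as it would control $d_2(x_t^j,x_t)$ only in terms of the $d_2$-distance of the endpoints, which need not tend to $0$. Instead I would invoke the convexity of $d_1$ along finite energy geodesics (Darvas, \cite{Dar15}): applied to the pair of finite energy geodesics $x^j_\bullet$ and $x_\bullet$, the function $t\mapsto d_1(x^j_t,x_t)$ is convex, whence
\[
d_1(x^j_t,x_t)\le (1-t)\,d_1(x_0^j,x_0)+t\,d_1(x_1^j,x_1)\longrightarrow 0
\]
as $j\to\infty$, for every $t\in[0,1]$. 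This gives $x^j_t\to x_t$ in the $d_1$-topology, establishing (4). As an alternative route for this step, one may realize $x_t$ as the Perron envelope of subgeodesics subordinate to the endpoint data and use the comparison/domination principle of pluripotential theory to control the envelope in $d_1$ under a $d_1$-perturbation of the endpoints. With the four conditions verified, $\sigma$ is compatible with $(\mathcal{E}^2,d_2)$.
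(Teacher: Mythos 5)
The one genuine gap is in your verification of condition~(3). Your argument reduces everything to the assertion that weak ($\Delta$-)convergence in the product Hadamard space $\mathcal{E}^2\times\mathcal{E}^2$ is componentwise, and you state this as if it were automatic. It is not: writing $d_P$ for the product metric, the asymptotic radius of the pair sequence at $(u,v)$ is $\varlimsup_j\left(d_2(u,x_j)^2+d_2(v,y_j)^2\right)$, a limsup of a \emph{sum}, which does not split into the sum of the two limsups; so the pair of componentwise asymptotic centers is not obviously the asymptotic center of the pair sequence. The claim is in fact true, but it needs a proof. One way: let $r(z)=\varlimsup_n d(z,z_n)$ denote the asymptotic radius function of a bounded sequence and recall the standard inequality $r(z)^2\geq r(c)^2+d(z,c)^2$ for the asymptotic center $c$ (this is how uniqueness of the center is proved, cf.\ \cite{DKS06}). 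Let $(u,v)$ be the asymptotic center of (any subsequence of) $\left((x_j,y_j)\right)_j$ and let $r_P$ be the asymptotic radius in the product along that subsequence. The displayed inequality applied in the product at the point $(x,y)$ gives $r_P(x,y)^2\geq r_P(u,v)^2+d_P\left((u,v),(x,y)\right)^2$. Conversely, passing to a further subsequence realizing $r_P(x,y)$ along which all four distance sequences converge, and applying the inequality componentwise (legitimate since $x_j\to x$, $y_j\to y$ weakly, so $x$ and $y$ are the asymptotic centers of \emph{every} subsequence), one gets $r_P(u,v)^2\geq r_P(x,y)^2+d_P\left((u,v),(x,y)\right)^2$. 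Adding the two forces $(u,v)=(x,y)$, which is the claim; your appeal to \cite{Bac13} Lemma~3.1 on the product then closes (3). Without this (or some substitute, such as a direct proof that $d_2$ is jointly weakly lsc), your step is an unproven assertion — though, to be fair, the paper's own one-line justification of (3), namely \cite{Bac13} Lemma~3.1 plus condition~(2), tacitly rests on exactly the same point.

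Apart from this, your treatment matches or improves on the paper's. Conditions (1)--(2) are handled as in the paper; note only that the stated conclusion of Lemma~\ref{lma:aux} is the liminf inequality, not weak convergence, so for (2) you should cite \cite{BDL17} Theorem~5.3 (used inside that lemma's proof) or Theorem~1.6 (which is what the paper quotes) directly. Condition~(4) is where you genuinely depart from the paper, which simply invokes \cite{BBJ15} Proposition~1.11: you instead use convexity of $d_1$ along pairs of finite energy geodesics, which is a valid and more self-contained route — it relies only on the convexity inequality the paper itself needs elsewhere (it is what is cited as \cite{DL18}~(1) in the proof of Theorem~\ref{thm:Had}, valid for all $p\in[1,\infty)$). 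Two repairs are needed: the attribution to \cite{Dar15} is wrong — the Busemann convexity of $d_1$ is not proved there, so cite \cite{DL18}~(1) (or \cite{BDL17}) instead — and you should say explicitly that a $d_2$-geodesic between points of $\mathcal{E}^2$ is the finite energy geodesic, hence also a finite energy geodesic segment in $\mathcal{E}^1$, so that the $d_1$-convexity applies to the pair $x^j_t$, $x_t$. The alternative ``Perron envelope'' route you sketch for (4) is too vague to evaluate, and is not needed once the convexity argument is in place.
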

For the definition of compatibility, see Section~\ref{subsec:abs}.

\begin{proof}
Condition~(1) is obvious. For Condition~(2),  recall that for a bounded sequence in $\mathcal{E}^2$, convergence in $\mathcal{E}^1$ implies convergence in the weak topology (\cite{BDL17} Theorem~1.6).
Condition~(3) follows from \cite{Bac13} Lemma~3.1 and Condition~(2). Finally, Condition~(4) follows from \cite{BBJ15} Proposition~1.11.
\end{proof}

\begin{proof}[Proof of Theorem~\ref{thm:main}]
Let $(M,d)=(\mathcal{E}^2,d_2)$.
Let $\sigma$ be the $d_1$-topology on $\mathcal{E}^1$. By Proposition~\ref{prop:compd1}, $\sigma$ is compatible with $(M,d)$.

(1) We apply Theorem~\ref{thm:abst} with $F=G=M$. The compactness condition is guaranteed by Corollary~\ref{cor:BBEGZc}.

(2) Recall that $M$ is decreasing along the inverse Monge--Amp\`ere flow according to \cite{CHT17} Lemma~4.6. 
We apply Theorem~\ref{thm:abst} with $F=D$, $G=M$. 
The compactness condition is guaranteed by Corollary~\ref{cor:BBEGZc}. As the inverse Monge--Amp\`ere flow admits global smooth solutions, by Proposition~\ref{prop:Bind}, we have
\[
\inf_{\varphi\in \mathcal{H}}R(\varphi)=\inf_{\varphi\in \mathcal{E}^2}R(\varphi)\,.
\]

Finally observe that in the Fano case, 
\[
\max_{\ell\in \mathcal{R}^{2}\setminus\{0\}} \frac{-\mathbf{D}(\ell)}{\|\ell\|}=0
\]
implies that $X$ is K-semistable (See \cite{Ber16}).
\end{proof}
\begin{remark}
In contrast to general Hadamard spaces, in $\mathcal{E}^2$ we have geodesic rays of the form $(Ct)_{t\geq 0}$. These rays have vanishing $\mm$. So 
\[
\max_{\ell\in \mathcal{R}^2\setminus\{0\}}\frac{-\mm(\ell)}{\|\ell\|}
\]
is always non-negative. Similar remark holds for $\dd$.
\end{remark}

\begin{remark}\label{rmk:infsup}
If the Calabi flow admits a global smooth solution, it will follow from the same proof that
\[
\inf_{\phi\in\mathcal{H}}Ca(\phi)=\max_{\ell\in \mathcal{R}^2\setminus\{0\}}\frac{-\mm(\ell)}{\|\ell\|}\,.
\]
\end{remark}

\subsection{Uniqueness of the maximizer}\label{subsec:pfc}
\begin{definition}\label{def:geoduns}
We say $(X,\omega)$ is \emph{geodesically unstable} if 
\[
\max_{\ell\in \mathcal{R}^{2}\setminus\{0\}} \frac{-\mm(\ell)}{\|\ell\|}>0\,.
\]
Otherwise, we say $(X,L)$ is \emph{geodesically semistable}.
\end{definition}
According to \cite{DL18} Theorem~1.5, $(X,\omega)$ is geodesically unstable if{f} there is a $C^{1,\bar{1}}$ geodesic ray $\ell$, such that $\mm(\ell)<0$.

\begin{theorem}\label{thm:Had}
$\mathcal{R}^2$ is a Hadamard space.
\end{theorem}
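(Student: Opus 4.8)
The plan is to reduce the statement to two facts about Hadamard spaces: that $(\mathcal{R}^2,d^c_2)$ is a complete geodesic space, and that it is $\mathrm{CAT}(0)$. Completeness and the existence of geodesics are already available from the material recalled in the excerpt: by \cite{DL18} Theorem~4.7 and Theorem~4.9 the space $(\mathcal{R}^2_\varphi,d^c_2)$ is a complete geodesic metric space, and for $p=2$ the finite energy geodesic rays defining $\mathcal{R}^2$ coincide with the metric geodesic rays of the Hadamard space $\mathcal{E}^2$ issuing from $\varphi$. Hence the entire content of the theorem is the $\mathrm{CAT}(0)$ (nonpositive curvature) condition, and this is where I would concentrate the work.

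First I would recognize $(\mathcal{R}^2,d^c_2)$ as the \emph{cone at infinity} of the Hadamard space $\mathcal{E}^2$, as already hinted in the introduction (\cite{Bal12}). Since $\mathcal{E}^2$ is a complete $\mathrm{CAT}(0)$ space, distinct geodesic rays issuing from the fixed basepoint $\varphi$ are never asymptotic: a convex, bounded, nonnegative function on $[0,\infty)$ vanishing at $0$ is identically $0$, so two rays from $\varphi$ at bounded $d_2$-distance must coincide. Combined with the existence and uniqueness of a ray from $\varphi$ in each asymptotic class (standard for complete $\mathrm{CAT}(0)$ spaces, \cite{Bal12}), the assignment sending a nonconstant ray $\ell$ to the pair consisting of its speed $\|\ell\|=d_2(\ell_0,\ell_1)$ and its class in the Tits boundary $\partial_T\mathcal{E}^2$ is a bijection of $\mathcal{R}^2$ onto the Euclidean cone over $\partial_T\mathcal{E}^2$, with the origin $0$ mapping to the cone point.

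The key computation I would then carry out is that this bijection is an isometry. For two rays $\ell^1,\ell^2$ from $\varphi$, the comparison angle $\bar\angle_t$ at $\varphi$ subtended by $\ell^1_t,\ell^2_t$ is nondecreasing in $t$ (this monotonicity is exactly what makes the limit defining $d^c_2$ exist) and increases to the Tits angle $\angle_T(\ell^1,\ell^2)$. Writing the Euclidean law of cosines for the comparison triangle with side lengths $t\|\ell^1\|$, $t\|\ell^2\|$, $d_2(\ell^1_t,\ell^2_t)$, dividing by $t^2$ and letting $t\to\infty$ yields
\[
d^c_2(\ell^1,\ell^2)^2=\|\ell^1\|^2+\|\ell^2\|^2-2\|\ell^1\|\|\ell^2\|\cos\angle_T(\ell^1,\ell^2),
\]
which is precisely the Euclidean cone distance with radial coordinates $\|\ell^1\|,\|\ell^2\|$ and angular coordinate the Tits metric on $\partial_T\mathcal{E}^2$. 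Granting this identification, the theorem follows from two classical results: the Tits boundary of a complete $\mathrm{CAT}(0)$ space is a $\mathrm{CAT}(1)$ space, and the Euclidean cone over a $\mathrm{CAT}(1)$ space is $\mathrm{CAT}(0)$ (\cite{Bal12}). Together with the completeness from \cite{DL18}, this shows that $\mathcal{R}^2$ is Hadamard.

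The hardest part will be the isometry identification: verifying the law-of-cosines limit rigorously and checking that the correspondence with the cone is a genuine bijective isometry, including the degenerate cases of the trivial ray, the zero-speed limit, and the matching of metric geodesic rays in $\mathcal{E}^2$ with the finite energy geodesic rays that define $\mathcal{R}^2$. An alternative, more self-contained route that avoids the Tits boundary would be to verify the Bruhat--Tits (CN) inequality for $d^c_2$ directly, applying the $\mathrm{CAT}(0)$ convexity inequality in $\mathcal{E}^2$ to the triangles with vertices $\varphi,\ell^1_t,\ell^2_t$ and passing to the limit $t\to\infty$; this would instead require identifying the midpoint and geodesic in $\mathcal{R}^2$ with the fiberwise midpoint and geodesic in $\mathcal{E}^2$, for which I would again rely on the structure theory of \cite{DL18}.
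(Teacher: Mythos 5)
Your proposal is correct, but your main route is genuinely different from the paper's. The paper proves the CAT(0) property by hand: it recalls the Darvas--Lu construction of the geodesic $(\ell^s)$ in $\mathcal{R}^2$ as a limit of geodesic segments $L^{t,s}$ in $\mathcal{E}^2$ from the basepoint to points $\ell^{,t}_s$ on the $\mathcal{E}^2$-geodesics joining $\ell^0_t$ to $\ell^1_t$, then applies the CAT(0) inequality of $\mathcal{E}^2$ to the triangles with vertices $\ell_t,\ell^0_t,\ell^1_t$, controls $d_2(\ell_T,\ell^s_T)/T$ by $d_2(\ell_t,\ell^{,t}_s)/t$ via the comparison inequality \cite{DL18}~(1), and passes to the limit $t\to\infty$, then $T\to\infty$ — i.e., exactly the ``alternative, more self-contained route'' you sketch in your last paragraph, including the reliance on \cite{DL18} for identifying the geodesics of $\mathcal{R}^2$ with the fiberwise limits. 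Your primary route instead identifies $(\mathcal{R}^2,d_2^c)$ isometrically with the Euclidean cone over the Tits boundary $\partial_T\mathcal{E}^2$ via the law-of-cosines limit, and then invokes two classical theorems (the Tits boundary of a complete CAT(0) space is CAT(1); the Euclidean cone over a CAT(1) space is CAT(0)). This is sound: the comparison-angle monotonicity, the uniqueness of the asymptotic ray from a fixed point in a complete CAT(0) space, and the fact that being CAT(0) is a purely metric property (so it transfers across your isometry without separately matching geodesics) make the argument close up, provided you carefully prove the isometry including the truncation-at-$\pi$ convention in the cone metric. What the two approaches buy: yours is softer, yields extra structure for free (the explicit cone description of $\mathcal{R}^2$ and the Tits-angle formula for $d_2^c$), but outsources the real content to the nontrivial structure theory of boundaries of Hadamard spaces; the paper's is elementary and self-contained modulo \cite{DL18}, needing only the CAT(0) inequality in $\mathcal{E}^2$ and a limiting argument, and it makes explicit that the inequality is verified for the actual Darvas--Lu geodesics of $\mathcal{R}^2$.
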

\begin{proof}
It is known that $\mathcal{R}^2$ is a complete geodesic metric space (\cite{DL18} Theorem~1.3, Theorem~1.4). So it suffices to prove that $\mathcal{R}^2$ satisfies the CAT(0)-inequality. More concretely, we need to show: if $\ell,\ell^s\in \mathcal{R}^2$ ($s\in [0,1]$), $\ell^s$ is a geodesic segment in $\mathcal{R}^2$, then for any $s\in [0,1]$, we have
\[
d_2^c(\ell,\ell^s)^2\leq (1-s)d_2^c(\ell,\ell^0)^2+sd_2^c(\ell,\ell^1)^2-s(1-s)d_2^c(\ell^0,\ell^1)^2\,.
\]
Without loss of generality, we may assume that the starting point of geodesic rays in $\mathcal{R}^2$ are $0$. We recall the construction of $\ell^s$ from $\ell^0$ and $\ell^1$. For each $t\geq 0$, let $(\ell^{,t}_s)_{s\in [0,1]}$ be the geodesic segment from $\ell^0_t$ to $\ell^1_t$. Let $(L^{t,s}_T)_{T\in [0,t]}$ be the geodesic segment from $0$ to $\ell^{,t}_s$. Then for any fixed $T\geq 0$, $L^{t,s}_T$ for $t\to \infty$ has a unique limit, the limit is defined to be $\ell^{s}_T$. 

Now for any $T\geq 0$,
\[
\frac{1}{T^2}d_2(\ell_T,\ell_T^s)^2=\lim_{t\to\infty}\frac{1}{T^2}d_2(\ell_T,L_T^{t,s})^2\leq \varlimsup_{t\to\infty} \frac{1}{t^2}d_2(\ell_t,\ell_s^{,t})\,,
\]
where the last inequality follows from \cite{DL18} (1).

Now since $\mathcal{E}^2$ is a Hadamard space, we find for any $t\geq 0$
\[
d_2(\ell_t,\ell_s^{,t})^2\leq  (1-s)d_2(\ell_t,\ell^0_t)^2+s d_2(\ell_t,\ell^1_t)^2-s(1-s)d_2(\ell^0_t,\ell^1_t)\,.
\]
Hence
\[
\begin{split}
\frac{1}{T^2}d_2(\ell_T,\ell_T^s)^2\leq \varlimsup_{t\to\infty} \frac{1}{t^2}\left((1-s)d_2(\ell_t,\ell^0_t)^2+s d_2(\ell_t,\ell^1_t)^2-s(1-s)d_2(\ell^0_t,\ell^1_t)\right)\\
=(1-s)d_2^c(\ell,\ell^0)^2+sd_2^c(\ell,\ell^1)^2-s(1-s)d_2^c(\ell^0,\ell^1)^2\,.
\end{split}
\]
Let $T\to\infty$, we conclude.
\end{proof}

\begin{proof}[Proof of Corollary~\ref{cor:main}]
We only prove part 1, since part 2 is similar.

Assume that $(X,\omega)$ is geodesically unstable. Let $\varphi\in \mathcal{E}^2$ with $M(\varphi)<\infty$.
Let $\ell^0,\ell^1$ be two \emph{different} minimizers of $\mathbf{M}$ on the unit sphere. 
Let $(\ell^{s})_{s\in [0,1]}$ be the unique $d_2^c$-geodesic between them. Since $\mm$ is convex in $\mathcal{R}^2$ (\cite{DL18} Theorem 4.11), we have
\begin{equation}
    -\mm(\ell^{s})\geq \inf_{\phi\in \mathcal{E}^2}Ca(\phi).
\end{equation}
By the CAT(0)-inequality of $\mathcal{R}^2$,
\[
\|\ell^{s}\|<1\,,\quad s\in (0,1)\,.
\]
Hence
\[
\frac{-\mm(\ell^{s})}{\|\ell^{s}\|}> \inf_{\phi\in \mathcal{E}^2}Ca(\phi)\,.
\]
This is a contradiction.
\end{proof}

In particular, the conditions of Corollary~\ref{corollary:stronger} are satisfied.

\section{Further remarks and conjectures}\label{sec:rmk}

\subsection{Relations between Theorem~\ref{thm:main} and Donaldson's conjecture}\label{subsec:TC}
In this section, we assume that the polarization of $X$ is integral, namely, coming from an ample line bundle $L$ on $X$. This assumption is not essential, but makes notations simpler.

Let $\mathcal{H}^{\NA}$ be the space of non-Archimedean metrics defined in \cite{BHJ19}, \cite{BHJ17}. Recall that there is a natural map $\iota:\mathcal{H}^{\NA}\rightarrow \mathcal{R}^p$ for $p\geq 1$. Moreover, the geodesic rays in the image of $\iota$ have $C^{1,1}$-regularity (\cite{CTW18}).
This construction dates back to \cite{PS07}. See also \cite{RWN14}, \cite{DDNL18}. 

The map admits a natural extension to an embedding
$\iota:\mathcal{E}^{1,\NA}\rightarrow \mathcal{R}^1$.
See Theorem~6.6 in \cite{BBJ15}. Here $\mathcal{E}^{1,\NA}$ is the non-Archimedean analogue of the usual $\mathcal{E}^1$ space. For the precise definition, we refer to \cite{BBJ15}, \cite{BJ18}, \cite{Bou18} and references therein. 

Now let us explain the relation between Donaldson's conjecture (i.e. equality in \eqref{eq:Don}, \eqref{eq:Don1}) and Theorem~\ref{thm:main}.

Let $\ell$ be the image of a non-Archimedean metric $\psi\in \mathcal{H}^{\NA}$ under the map $\iota$. According to Theorem~1.2 in \cite{His16}, 
\[
\|\psi\|_{L^2}^2=\frac{1}{V}\int_X |\dot{\ell}_0|^2 \,\omega_{\ell_0}^n\,.
\]
Since we already know that $\ell$ has $C^{1,1}$ regularity, it follows from \cite{Dar15} Lemma~4.11 that 
\[
\frac{1}{V}\int_X |\dot{\ell}_0|^2\, \omega_{\ell_0}^n=\|\ell\|^2\,.
\]

According to \cite{BHJ17} Proposition~2.8,
\[
\mathrm{DF}(\mathcal{X},\mathcal{L})=M^{\NA}(\psi)\,,
\]
where $(\mathcal{X},\mathcal{L})$ is a normal representative of $\psi$ with reduced central fibre. This shows the equivalence between \eqref{eq:Don1} and \eqref{eq:Don}. 

\begin{proposition}\label{prop:mmrel}
Notations as above, then
\[
\mm(\ell)\leq M^{\NA}(\psi)\,.
\]
\end{proposition}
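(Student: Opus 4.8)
The plan is to exploit the additive decomposition of the Mabuchi functional recalled in Section~\ref{sec:inv}, namely $M=H+\bar{S}E-nE_R$, together with the matching decomposition of the non-Archimedean Mabuchi functional $M^{\NA}=H^{\NA}+\bar{S}E^{\NA}-nE_R^{\NA}$ established in \cite{BHJ17}. Since $\ell=\iota(\psi)$ is convex-compatible with $M$, the limit $\mm(\ell)=\lim_{t\to\infty}M(\ell_t)/t$ exists in $(-\infty,\infty]$, and it distributes over the three summands as soon as each individual radial slope exists. Thus the plan is to reduce the statement to a term-by-term comparison: the asymptotic slope of each Archimedean functional along $\ell$ against its non-Archimedean counterpart attached to $\psi$. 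The upshot we aim for is that the two \emph{energy} slopes are exact equalities, while only the \emph{entropy} slope degenerates into an inequality, and this single inequality propagates to give the claim.

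First I would dispatch the energy terms. Both $E$ and $E_R$ are of intersection-theoretic nature, so their asymptotic behaviour along a geodesic ray coming from a test configuration is governed purely by cohomological data that the non-Archimedean side captures exactly. Using that $\ell$ has $C^{1,1}$-regularity (\cite{CTW18}) and is the image $\iota(\psi)$ of a non-Archimedean metric, the slope formulas of \cite{BHJ17} give
\[
\lim_{t\to\infty}\frac{E(\ell_t)}{t}=E^{\NA}(\psi)\,,\qquad \lim_{t\to\infty}\frac{E_R(\ell_t)}{t}=E_R^{\NA}(\psi)\,.
\]
No inequality is lost in either of these; they are genuine equalities, and in particular both radial slopes are finite.

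The crux, and the main obstacle, is the entropy term, where one only has
\[
\lim_{t\to\infty}\frac{H(\ell_t)}{t}\leq H^{\NA}(\psi)\,.
\]
The mechanism I would use is to pass to a resolution of the total space of $\psi$ with simple normal crossing central fibre, express the entropy $H(\ell_t)$ in terms of the relative canonical divisor, and compare it against $H^{\NA}(\psi)=\int A_X\,\mathrm{MA}(\psi)$, the integral of the log-discrepancy function against the non-Archimedean Monge--Amp\`ere measure. The inequality arises because $H$ is lower semicontinuous while the smooth ray $\iota(\psi)$ only feels the \emph{reduced} structure of the degeneration, whereas $H^{\NA}$ weighs the full (possibly non-reduced) central fibre by its log discrepancies; the deficit is exactly the failure of the central fibre to be reduced, which is also the source of the gap between $M^{\NA}$ and $\mathrm{DF}$ discussed after \cite{BHJ17} Proposition~2.8. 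I expect this semicontinuity-plus-discrepancy estimate to be the only genuinely delicate point. Once it is in place, combining the three steps gives
\[
\mm(\ell)=\lim_{t\to\infty}\frac{H(\ell_t)}{t}+\bar{S}E^{\NA}(\psi)-nE_R^{\NA}(\psi)\leq H^{\NA}(\psi)+\bar{S}E^{\NA}(\psi)-nE_R^{\NA}(\psi)=M^{\NA}(\psi)\,,
\]
which is the desired bound; everything beyond the entropy estimate is bookkeeping with the established slope formulas.
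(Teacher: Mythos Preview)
Your decomposition strategy is genuinely different from the paper's proof. The paper never splits $M$ into entropy and energy parts; instead it imports from \cite{BDL16} a \emph{smooth} subgeodesic ray $\tilde{\ell}$ satisfying $M(\tilde{\ell}_t)=\mathrm{DF}(\mathcal{X},\mathcal{L})\,t+O(1)$ and $d_2(\ell_t,\tilde{\ell}_t)\leq C$, then for each $a>0$ connects $\ell_0$ to $\tilde{\ell}_t$ by a $d_2$-geodesic $v_a^t$, proves $v_a^t\to\ell_a$ as $t\to\infty$, and uses convexity of $M$ together with its lower semicontinuity to conclude $M(\ell_a)\leq M(\ell_0)+a\,\mathrm{DF}(\mathcal{X},\mathcal{L})$. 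The whole argument lives at the level of $M$, never isolating $H$.

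Your energy slope identities are fine (they are the content of \cite{BHJ19}, though you cite \cite{BHJ17}). The gap is entirely in the entropy step. The claim $\lim_{t\to\infty}H(\ell_t)/t\leq H^{\NA}(\psi)$ is precisely equivalent to the proposition once the energy identities are in hand, so you cannot invoke it without an independent argument. Your sketch does not supply one: lower semicontinuity of $H$ produces \emph{lower} bounds under limits, not upper bounds, so appealing to it here points in the wrong direction. The remark about reduced versus non-reduced central fibre is also off target, since $H^{\NA}(\psi)$ is intrinsic to the non-Archimedean metric and the representative chosen in the paper already has reduced central fibre; the gap you describe is the discrepancy $\mathrm{DF}-M^{\NA}$, which is zero in that setting and irrelevant to comparing $\mm(\ell)$ with $M^{\NA}(\psi)$. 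Computing the entropy slope of the $C^{1,\bar 1}$ weak geodesic directly requires understanding how the Monge--Amp\`ere measures $\omega_{\ell_t}^n$ concentrate as $t\to\infty$, which is a genuine analytic input (this is essentially what \cite{Li20} supplies for the reverse inequality, as noted in the remark following the proposition). The paper's convexity argument is designed precisely to sidestep this difficulty: it transfers the known slope along the smooth ray $\tilde{\ell}$ to the weak ray $\ell$ using only convexity and lsc of $M$, with lsc applied in the correct direction $M(\ell_a)\leq\varliminf_t M(v_a^t)$.
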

\begin{proof}
According to \cite{BDL16} (4.2) and (4.3), we have a subgeodesic ray $\tilde{\ell}_t$, so that
\[
M(\tilde{\ell}_{t})=\mathrm{DF}(\mathcal{X},\mathcal{L})t+\mathcal{O}(1)\,,\quad d_2(\ell_{t},\tilde{\ell}_t)\leq C\,.
\]
For each $t>0$, let $[0,t]\ni a\mapsto v^t_a$ be the $d_2$-geodesic connecting $\ell_0$ to $\tilde{\ell}_t$. Let $\ell'$ be the geodesic ray with $\ell'_0=\tilde{\ell}_0$, which is parallel to $\ell$. The existence and uniqueness of $\ell'$ is guaranteed by Proposition~4.1 in \cite{DL18}. Let $[0,t]\ni a\mapsto u^{t}_a$ be the $d_2$-geodesic connecting $\ell_0$ to $\ell'_t$. As in the proof of \cite{DL18} Proposition~4.1, for fixed $a\geq 0$, $u_a^t\to \ell_a$ as $t\to \infty$. Now by \cite{DL18} (1), 
\[
d_2(u^t_a,v^t_a)=\mathcal{O}(1/t)\,.
\]
Hence we conclude
\[
v_a^t\to \ell_a\,,\quad t\to\infty\,.
\]
By the convexity of $M$, we find
\[
M(v_a^t)\leq \left(1-\frac{a}{t} \right)M(\ell_0)+\frac{a}{t}M(\tilde{\ell}_t)\,.
\]
Let $t\to\infty$ and use the fact that $M$ is lsc, we find
\[
\frac{M(\ell_a)}{a}\leq \frac{M(\ell_0)}{a}+\mathrm{DF}(\mathcal{X},\mathcal{L})\,.
\]
Finally, let $a\to \infty$, we conclude
\[
\mm(\ell)\leq \mathrm{DF}(\mathcal{X},\mathcal{L})\,.
\]
\end{proof}
\begin{remark}
The reverse inequality is recently proved by Chi Li in \cite{Li20}.
\end{remark}

\begin{conjecture}\label{conj:a}\footnote{The conjecture is true by the recent work \cite{Li20}.}
The Darvas--He geodesic lies in $\iota(\mathcal{E}^{1,\NA})$.
\end{conjecture}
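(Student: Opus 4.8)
The plan is to identify $\iota(\mathcal{E}^{1,\NA})\subset\mathcal{R}^1$ with the set of \emph{maximal} finite-energy geodesic rays (in the sense of the Ross--Witt Nyström correspondence, see \cite{BBJ15}, \cite{DL18}) and then to show that the Darvas--He ray $\ell^{\infty}$ produced in the proof of Theorem~\ref{thm:abst} is maximal. Concretely, to every finite-energy ray $\ell\in\mathcal{R}^1$ one associates its non-Archimedean potential $\psi_\ell\in\mathcal{E}^{1,\NA}$, obtained as the Legendre transform of the associated test curve; one always has $\iota(\psi_\ell)\geq\ell$, with equality precisely when $\ell$ is maximal. Thus it suffices to prove $\ell^{\infty}=\iota(\psi_{\ell^{\infty}})$. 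First I would verify that $\psi:=\psi_{\ell^{\infty}}\in\mathcal{E}^{1,\NA}$, which follows from the finiteness of the norm $\|\ell^{\infty}\|=1$ together with the identification of the radial energy with the non-Archimedean energy; I then set $\hat\ell:=\iota(\psi)$, the maximalization of $\ell^{\infty}$, noting that $\psi_{\hat\ell}=\psi$ by idempotency.

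The heart of the argument is a comparison of the destabilizing ratios of $\ell^{\infty}$ and $\hat\ell$. Combining Proposition~\ref{prop:mmrel}, which gives $\mm(\hat\ell)\leq M^{\NA}(\psi)$ since $\hat\ell=\iota(\psi)$, with the reverse inequality $\mm(\ell)\geq M^{\NA}(\psi_\ell)$ valid for \emph{every} finite-energy ray (the analytic input supplied by \cite{Li20}), one obtains on the one hand $\mm(\hat\ell)=M^{\NA}(\psi)$, and on the other hand $\mm(\ell^{\infty})\geq M^{\NA}(\psi_{\ell^{\infty}})=M^{\NA}(\psi)=\mm(\hat\ell)$. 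Since the two rays share the same non-Archimedean data, their $L^2$-norms coincide, $\|\hat\ell\|=\|\ell^{\infty}\|$ (the norm of a ray depends only on $\psi_\ell$, by \cite{His16} and \cite{DL18}). Hence
\[
\frac{-\mm(\hat\ell)}{\|\hat\ell\|}\geq\frac{-\mm(\ell^{\infty})}{\|\ell^{\infty}\|}\,.
\]
Because $\ell^{\infty}$ maximizes $-\mm/\|\cdot\|$, this is forced to be an equality, so $\hat\ell$ is also a maximizer; in the geodesically unstable case the maximizer is unique by Corollary~\ref{cor:main}, whence $\ell^{\infty}=\hat\ell=\iota(\psi)\in\iota(\mathcal{E}^{1,\NA})$. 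In the geodesically semistable case the extremal ray is (parallel to) $(\varphi+t)_t$, which manifestly lies in $\iota(\mathcal{E}^{1,\NA})$, so there the statement is trivial.

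The main obstacle is the reverse inequality $\mm(\ell)\geq M^{\NA}(\psi_\ell)$ for a general, possibly non-maximal, finite-energy ray. The energy contributions $\bar S E-nE_R$ to $M$ are controlled directly through the Ross--Witt Nyström correspondence, but the entropy term $H$ is only lower semicontinuous, and its radial slope dominates the non-Archimedean entropy only via a delicate Jensen/semicontinuity estimate along the Legendre transform; this is exactly what \cite{Li20} establishes. A secondary technical point I would need to pin down is the precise identification of $\iota(\mathcal{E}^{1,\NA})$ with the maximal rays, together with the norm identity $\|\ell\|=\|\psi_\ell\|_{L^2}$ for non-maximal $\ell$, which I expect to extract from \cite{BBJ15} and \cite{DL18}.
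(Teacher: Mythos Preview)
This statement is a \emph{conjecture} in the paper, not a theorem; the paper offers no proof whatsoever and merely records in a footnote that the conjecture was subsequently established in \cite{Li20}. There is therefore no ``paper's own proof'' to compare your proposal against.

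Your outline is a natural way to extract the conjecture from the main analytic input of \cite{Li20} (the inequality $\mm(\ell)\geq M^{\NA}(\psi_\ell)$ for arbitrary finite-energy rays), and the overall architecture---maximalize $\ell^{\infty}$ to $\hat\ell$, show $\hat\ell$ has at least as good a destabilizing ratio, then invoke the uniqueness of Corollary~\ref{cor:main}---is correct in spirit. Two genuine gaps remain, however. First, you invoke Proposition~\ref{prop:mmrel} for $\psi\in\mathcal{E}^{1,\NA}$, but in the paper that proposition is stated and proved only for $\psi\in\mathcal{H}^{\NA}$ (its proof uses a smooth subgeodesic coming from a resolved test configuration); the extension to $\mathcal{E}^{1,\NA}$ is precisely part of what \cite{Li20} supplies, so you cannot cite the paper for it. Second, and more seriously, your assertion that $\|\hat\ell\|=\|\ell^{\infty}\|$ because ``the norm of a ray depends only on $\psi_\ell$'' is not justified by the references you give: \cite{His16} concerns rays induced by test configurations, which are automatically maximal, and nothing in \cite{DL18} says that two distinct rays with the same non-Archimedean shadow have equal $d_2$-speed. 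The whole point of maximality is that $\hat\ell\geq\ell^{\infty}$ may be strict, and equality of norms does not follow formally. What your argument actually needs is the inequality $\|\hat\ell\|\leq\|\ell^{\infty}\|$ together with $\hat\ell\in\mathcal{R}^2$; both require knowing that maximalization is a $d_2$-contraction on rays, which is again outside the scope of the present paper and is part of the later developments around \cite{Li20}. You correctly flag this as something to ``pin down,'' but as written the step is asserted rather than proved.
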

In terms of the terminology of \cite{BBJ15}, we conjecture that the Darvas--He geodesic is \emph{maximal}.

Observe that Donaldson's conjecture (equality in \eqref{eq:Don1} and \eqref{eq:Don}) will follow from our result if the followings are true:
\begin{enumerate}
\item Conjecture~\ref{conj:a} is true and we have the following recovery property: for each $\ell\in \mathcal{R}^2\cap \iota(\mathcal{E}^{1,\NA})$, one could find a sequence $\ell^j$ in $\iota(\mathcal{H}^{\NA})$ such that $d_2^c(\ell,\ell^j)\to 0$, and such that
\[
\mm(\ell^j)\to \mm(\ell)\,.
\]
\item
Chen's conjecture is true: the Calabi flow admits long time smooth solution for an arbitrary smooth initial value (See Remark~\ref{rmk:infsup}).

\end{enumerate}

A positive result in this direction is recently proved by Darvas and Lu (\cite{DL18} Theorem~1.5). They showed that $\mathcal{R}^{1,\bar{1}}$ (the space of $C^{1,\bar{1}}$ geodesics) is dense in $\mathcal{R}^p$ for any $p\in [1,\infty)$. Moreover, a recovery property holds in this case.

Due to Theorem~\ref{thm:Had}, one can study the gradient flow of $\mathbf{M}$ on $\mathcal{R}^2$. This flow can be properly called the \emph{radial Calabi flow}. The behaviour of this flow will be closely related to our conjecture. 

\subsection{Harnack estimate}
We restrict our discussion to the inverse Monge--Amp\`ere flow here.

It is natural to guess that the Darvas--He geodesic rays that we construct should be locally bounded. By using Theorem~3.4 in \cite{Da17}, this will follow from a lower bound 
\[
\inf_X \varphi_t \geq -C t-C
\]
for a solution $\varphi_t$ to \eqref{eq:IMAF}.

The proof of \emph{a priori} bound of $\inf_X \varphi_t$ on finite time intervals in \cite{CHT17} is by means of contradiction, and it seems impossible to get qualitative bounds using their methods. 

A similar situation exists for K\"ahler--Ricci flows. However, in that case, the Sobolev constant along the flow is uniformly bounded, as a consequence of the monotonicity of the Perelman's W-entropy (See~\cite{Ye07} for details).
Then applying the usual Moser iteration, we arrive at a Harnack inequality (See~\cite{Rub09}, for example).

The problem for the inverse Monge--Ampère flow is that, the Perelman entropy, in its original form, is not monotone. And there does not seem to be any method to control the Sobolev constant in this case.  

We also notice that it is easy to deduce a lower bound exponential in $t$ using the Moser--Trudinger inequality \cite{BB11} and Ko{\l}odziej's $L^{\infty}$-estimate. See \cite{BEGZ10} for an explicit version of Ko{\l}odziej's estimate.

If the Harnack estimate does hold, we conclude immediately that the Darvas--He geodesic $\ell^{(t)}$ is non-trivial. So we get plenty of criteria for the existence of K\"ahler--Einstein metrics. 

Similar remarks hold also in the weak Calabi flow setting. Note that we do not require that the Calabi flow has a global smooth solution.

\printbibliography

\bigskip
  \footnotesize

  Mingchen Xia, \textsc{Department of Mathematics, Chalmers Tekniska Högskola, G\"oteborg}\par\nopagebreak
  \textit{E-mail address}, \texttt{xiam@chalmers.se}\par\nopagebreak
  \textit{Homepage}, \url{http://www.math.chalmers.se/~xiam/}.

\end{document}